\documentclass[11pt]{article}

\usepackage[T1]{fontenc}
\usepackage{lmodern}
\usepackage{microtype}
\usepackage{amsmath,amssymb,amsthm,amsfonts}
\usepackage{mathtools}
\usepackage{mathrsfs}
\usepackage{bm}
\usepackage{graphicx}
\usepackage{float}
\usepackage{multirow}
\usepackage{booktabs}
\usepackage{longtable}
\usepackage{arydshln}
\usepackage{enumerate}
\usepackage{cite}
\usepackage{aliascnt}
\usepackage{hyperref}
\usepackage[a4paper,margin=2.5cm]{geometry}
\usepackage[nameinlink,capitalise]{cleveref}

\hypersetup{
  colorlinks=true,
  linkcolor=blue,
  citecolor=blue,
  urlcolor=blue
}

\allowdisplaybreaks[2]

\numberwithin{equation}{section}

\newtheorem{theorem}{Theorem}[section]

\newaliascnt{lemma}{theorem}
\newtheorem{lemma}[lemma]{Lemma}
\aliascntresetthe{lemma}

\newaliascnt{proposition}{theorem}
\newtheorem{proposition}[proposition]{Proposition}
\aliascntresetthe{proposition}

\newaliascnt{corollary}{theorem}

\aliascntresetthe{corollary}

\theoremstyle{definition}

\newaliascnt{definition}{theorem}

\aliascntresetthe{definition}

\newaliascnt{example}{theorem}

\aliascntresetthe{example}

\theoremstyle{remark}

\newaliascnt{remark}{theorem}
\newtheorem{remark}[remark]{Remark}
\aliascntresetthe{remark}

\theoremstyle{plain}

\newaliascnt{claim}{theorem}

\aliascntresetthe{claim}

\crefname{theorem}{Theorem}{Theorems}
\crefname{lemma}{Lemma}{Lemmas}
\crefname{proposition}{Proposition}{Propositions}
\crefname{corollary}{Corollary}{Corollaries}
\crefname{definition}{Definition}{Definitions}
\crefname{example}{Example}{Examples}
\crefname{remark}{Remark}{Remarks}
\crefname{claim}{Claim}{Claims}

\newcommand{\D}{\mathbb D}
\newcommand{\T}{\mathbb T}
\newcommand{\C}{\mathbb C}

\newcommand{\norm}[1]{\left\lVert #1\right\rVert}
\newcommand{\abs}[1]{\left\lvert #1\right\rvert}
\newcommand{\dd}{\,\mathrm d}
\newcommand{\opnorm}[1]{\norm{#1}_{\mathcal B^\alpha\to\mathcal B^\alpha}}

\begin{document}

\title{\Large\bf
Norm of the Hilbert Matrix Operator on Bloch-Type Spaces}

\author{Puyu Cui, 
Zhaopeng Lin\thanks{Corresponding author},
Yufeng Lu
}

\date{}

\maketitle

\vspace{-0.8cm}

\begin{center}
\begin{minipage}{14cm}\small
\noindent{\bf Abstract}\quad 
We determine the exact norm of the Hilbert matrix operator
$\mathcal H$ on the $\alpha$-Bloch space $\mathcal B^\alpha$ for
$1<\alpha<2$, together with all norm-attaining functions of norm one.
For $1<\alpha\leq3/2$, we obtain an explicit formula for the exact norm
in terms of the Gamma function.  For $3/2<\alpha<2$, we obtain an exact one-dimensional
maximization formula and show that the corresponding maximum is attained
at an interior point of $(0,1)$. We also determine the exact norm of
$\mathcal H:\mathcal B^\alpha\to\mathcal B^\alpha_{\log}$.
The norm formula changes at the critical parameter $\alpha=4/3$,
and the exact norm is obtained for the full range $1<\alpha<2$.  The norm-attaining functions are completely
characterized for both operators.
\endgraf

\noindent \textbf{Mathematics Subject Classification [2020]}\quad Primary 47B91; Secondary 30H30, 47B38.\\
\noindent \textbf{Keywords} \quad Hilbert matrix operator, Bloch-type space, logarithmically weighted Bloch space,
exact operator norm.
\end{minipage}
\end{center}

\section{Introduction}
\label{sec:introduction}

Let $\C$ denote the complex plane and let $
 \D=\{z\in\C:\abs z<1\}$
be the open unit disk.  The Hilbert matrix $
 \left(\frac{1}{n+k+1}\right)_{n,k\geq0}$ 
is a classical object in operator theory.  If
$f(z)=\sum_{k\geq0}a_kz^k$ is analytic in the unit disk, its formal
action is given by
\[
 \mathcal Hf(z)
 =
 \sum_{n\geq0}
 \left(
   \sum_{k\geq0}\frac{a_k}{n+k+1}
 \right)z^n.
\]
On the analytic function spaces considered below, it is more convenient
to work with the integral form
\begin{equation}\label{eq:H-definition}
 \mathcal Hf(z)
 =
 \int_0^1\frac{f(t)}{1-tz}\,\dd t,
 \qquad z\in\D.
\end{equation}

The boundedness and norm of the Hilbert matrix operator on spaces of
analytic functions have been studied extensively.  Diamantopoulos and
Siskakis \cite{DiamantopoulosSiskakis2000} proved that $\mathcal H$ is
bounded on the Hardy spaces $H^p$, $1<p<\infty$, and obtained an upper
estimate for its norm.  Diamantopoulos \cite{Diamantopoulos2004}
subsequently considered the Bergman spaces $A^p$, $2<p<\infty$, and
derived the corresponding norm estimates.  Dostani\'c, Jevti\'c and
Vukoti\'c \cite{DostanicJevticVukotic2008} determined the exact norm of
$\mathcal H$ on $H^p$, $1<p<\infty$, and, in the Bergman setting,
obtained the exact norm on $A^p$ for $4\leq p<\infty$.  The remaining
range $2<p<4$ was later settled by Bo\v{z}in and Karapetrovi\'c
\cite{BozinKarapetrovic2018}, thereby completing the exact norm problem
on the classical Bergman spaces $A^p$ for all $2<p<\infty$.

The theory has subsequently been developed in several directions.
Galanopoulos, Girela, Pel\'aez and Siskakis
\cite{GalanopoulosGirelaPelaezSiskakis2014} studied generalized Hilbert
operators on Hardy, weighted Bergman and Dirichlet-type spaces, while
\L{}anucha, Nowak and Pavlovi\'c \cite{LanuchaNowakPavlovic2012}
investigated the Hilbert matrix on a broader class of analytic function
spaces.

An important line of research concerns the exact norm of $\mathcal H$
on the standard weighted Bergman spaces $A_\alpha^p$.  Karapetrovi\'c
\cite{Karapetrovic2018Weighted} obtained the lower bound
\[
 \norm{\mathcal H}_{A_\alpha^p\to A_\alpha^p}
 \geq
 \frac{\pi}
 {\sin\left(\frac{(\alpha+2)\pi}{p}\right)}
 =
 B\left(
 \frac{\alpha+2}{p},
 1-\frac{\alpha+2}{p}
 \right),
 \qquad -1<\alpha<p-2,
\]
and conjectured that equality holds throughout this range.  The
conjectured formula was subsequently verified for several ranges of
the parameters by Karapetrovi\'c
\cite{Karapetrovic2018Weighted,Karapetrovic2021},
Lindstr\"om, Miihkinen and Wikman
\cite{LindstromMiihkinenWikman2021}, and Dai \cite{Dai2024}.
More recently, Bao, Tian and Wulan \cite{BaoTianWulan2026} further
enlarged the range of parameters for which the formula is valid.
Subsequently, Wulan, Zhou and Zhu \cite{WulanZhouZhu2026} obtained
further exact norm results for certain even exponents and constructed
counterexamples showing that the conjectured beta-function formula
does not hold for all admissible parameters.  Thus the exact norm of
the Hilbert matrix on weighted Bergman spaces depends essentially on
the parameter range.

Related questions have also been considered on Korenblum and
logarithmically weighted spaces.  Lindstr\"om, Miihkinen and Wikman
\cite{LindstromMiihkinenWikman2019} determined the exact norm on the
Korenblum spaces $H^\infty_\alpha$ for $0<\alpha\leq2/3$ and obtained
an upper bound for $2/3<\alpha<1$.  Dai \cite{Dai2022} subsequently
obtained a norm representation for $0<\alpha<1$.  Logarithmically weighted analytic function spaces have also been
considered in connection with the Hilbert matrix operator; see
Karapetrovi\'c \cite{Karapetrovic2018Log}.

We now turn to the setting most directly related to the present paper.
For $\alpha>0$, let $\mathcal B^\alpha$ denote the $\alpha$-Bloch space,
and let $\mathcal B^\alpha_{\log}$ denote the corresponding
logarithmically weighted $\alpha$-Bloch space; their norms are recalled
in Section~\ref{sec:preliminaries}.  We write
$\mathcal B=\mathcal B^1$ and
$\mathcal B_{\log}=\mathcal B^1_{\log}$.
For background on Bloch and Bloch-type spaces, see
\cite{AndersonCluniePommerenke1974,Zhu1993}.

Hu and Ye \cite{HuYe2025} recently studied norm problems for the
Hilbert matrix operator on and between several spaces of analytic
functions, including Korenblum, Hardy and Bloch-type spaces.  In
particular, they showed that
\[
 \mathcal H:\mathcal B^\alpha\longrightarrow\mathcal B^\alpha
\]
is bounded precisely when $
 1<\alpha<2$. 
For this range they obtained lower and upper bounds for $
 \norm{\mathcal H}_{\mathcal B^\alpha\to\mathcal B^\alpha}$, 
whereas $\mathcal H$ is not bounded on $\mathcal B^\alpha$ when
$0<\alpha\leq1$ or $\alpha\geq2$.

In a related recent work, Ye and Zheng \cite{YeZheng2025} considered
logarithmically weighted Bloch and Hardy spaces.  They proved that
\[
 \norm{\mathcal H}_{\mathcal B\to\mathcal B_{\log}}
 =\frac32
\]
and, for $1<\alpha<2$, obtained two-sided estimates for
\[
 \mathcal H:
 \mathcal B^\alpha\longrightarrow\mathcal B^\alpha_{\log}.
\]
They also showed that   $
 \mathcal H:
 \mathcal B^\alpha\longrightarrow\mathcal B^\alpha_{\log}$ 
is bounded precisely for $
 1\leq\alpha<2$, 
where the endpoint $\alpha=1$ corresponds to
$\mathcal B\to\mathcal B_{\log}$.

Thus, for both mappings considered here, the boundedness ranges were
already completely determined, while the exact norm problem for
$1<\alpha<2$ remained open.  In the present paper, we determine these
exact norms and characterize all corresponding norm-attaining functions.
Our main results are as follows.

 Let $
 \T=\{\lambda\in\C:\abs{\lambda}=1\}$.  
For $1<\alpha<2$, set
\begin{align}
 P_\alpha(t)
 &=
 \int_0^t(1-s^2)^{-\alpha}\,\dd s,
 \qquad 0\leq t<1,
 \label{eq:P}\\
 C_\alpha
 &=
 \int_0^1(1-t)(1-t^2)^{-\alpha}\,\dd t,
 \label{eq:C}\\
 D_\alpha(r)
 &=
 (1-r^2)^\alpha
 \int_0^1\frac{tP_\alpha(t)}{(1-rt)^2}\,\dd t,
 \qquad 0\leq r<1.
 \label{eq:D}
\end{align}
We also define
\begin{equation}\label{eq:F}
 F_\alpha(z)
 =
 \int_0^z(1-\zeta^2)^{-\alpha}\,\dd\zeta.
\end{equation}
Here $(1-z^2)^{-\alpha}$ denotes the analytic branch on $\D$
which is positive on $[0,1)$.  Here and below, $\Gamma$ denotes
the Gamma function.

\begin{theorem} 
\label{thm:main}
Let $1<\alpha<2$.  Then $\mathcal H$ is bounded on $\mathcal B^\alpha$ and
\begin{equation}\label{eq:exact-norm}
 \opnorm{\mathcal H}=C_\alpha+\sup_{0\leq r<1}D_\alpha(r).
\end{equation}
The norm-attaining functions of norm one are exactly
\begin{equation}\label{eq:extremals}
 f(z)=\lambda F_\alpha(z),\qquad \lambda\in\T.
\end{equation}
Moreover,
\begin{equation}\label{eq:C-gamma}
 C_\alpha=\frac{1}{2(\alpha-1)}+
 \frac{\sqrt\pi\,\Gamma(1-\alpha)}{2\Gamma(3/2-\alpha)},
\end{equation}
where the gamma quotient is interpreted by continuous extension at
$\alpha=3/2$.

If $1<\alpha\leq3/2$, then
\begin{equation}\label{eq:closed-norm}
 \opnorm{\mathcal H}
 =C_\alpha+\frac{\pi}{\sin\left(\pi(\alpha-1)\right)}.
\end{equation}
In particular,
\[
 \norm{\mathcal H}_{\mathcal B^{3/2}\to\mathcal B^{3/2}}=1+\pi.
\]

If $3/2<\alpha<2$, then $D_\alpha$ extends continuously to $[0,1]$ with
\begin{equation}\label{eq:D-boundary}
 D_\alpha(1)=\frac{\pi}{\sin\bigl(\pi(\alpha-1)\bigr)},
\end{equation}
but
\begin{equation}\label{eq:interior-strict}
 \sup_{0\leq r<1}D_\alpha(r)>D_\alpha(1).
\end{equation}
Consequently, the supremum in \eqref{eq:exact-norm} is attained at an interior point 
$r_\alpha\in(0,1)$.    
\end{theorem}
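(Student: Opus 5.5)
The approach is to work with the standard $\alpha$-Bloch norm $\norm f_{\mathcal B^\alpha}=\abs{f(0)}+\sup_{z\in\D}(1-\abs z^2)^\alpha\abs{f'(z)}$. Under this normalization, $\norm f\le1$ gives the pointwise bounds $\abs{f'(t)}\le(1-t^2)^{-\alpha}$ and $\abs{f(t)}\le\abs{f(0)}+P_\alpha(t)$ on $[0,1)$.

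\textbf{Step 1: estimate at the origin.} Write $\mathcal Hf(0)=\int_0^1 f(t)\,\dd t=f(0)+\int_0^1(1-t)f'(t)\,\dd t$, using integration by parts; the boundary term vanishes because $\alpha<2$. This gives
\[
\abs{\mathcal Hf(0)}\le\abs{f(0)}+C_\alpha\sup_{t}(1-t^2)^\alpha\abs{f'(t)}.
\]

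\textbf{Step 2: estimate of the derivative.} Differentiate $\mathcal Hf(z)=\int_0^1 t f(t)(1-tz)^{-2}\,\dd t$. I would split $f(t)=f(0)+\int_0^t f'$. The constant part contributes $f(0)\int_0^1 t(1-tz)^{-2}\,\dd t$. Its weighted modulus is at most $\abs{f(0)}(1-\abs z^2)^\alpha\int_0^1 t\abs{1-tz}^{-2}\,\dd t$, and this tends to $0$ as $\abs z\to1$ (it is $O((1-\abs z)^{\alpha-1})$). More importantly, the combination of the two parts must be handled jointly. Since $\abs{1-tz}\ge1-t\abs z$, the $f'$-part is bounded by
\[
(1-\abs z^2)^\alpha\int_0^1\frac{tP_\alpha(t)}{(1-\abs z t)^2}\,\dd t\,\norm{f'}_*=D_\alpha(\abs z)\,\norm{f'}_*.
\]
To get the norm as exactly $C_\alpha+\sup D_\alpha$, I would first reduce to $f(0)=0$. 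Write $f=f(0)+g$; then $\mathcal H1=-\log(1-z)/z$, which has $\mathcal B^\alpha$ norm contribution $1+\sup(1-\abs z^2)^\alpha\abs{(\mathcal H1)'}$. I expect one must check that the constant part contributes at most $\abs{f(0)}\cdot(C_\alpha+\sup D_\alpha)$. Equivalently, I would bound $\norm{\mathcal H1}_{\mathcal B^\alpha}\le C_\alpha+\sup D_\alpha$ by comparing it with $\mathcal HF_\alpha$, where $F_\alpha$ dominates termwise. Summing the two parts gives the upper bound $C_\alpha+\sup_r D_\alpha(r)$.

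\textbf{Step 3: sharpness and extremals.} For $F_\alpha$ every inequality becomes an equality at $z=r\in[0,1)$: $F_\alpha'(t)=(1-t^2)^{-\alpha}$ is positive, $F_\alpha(t)=P_\alpha(t)$, and $1-tr>0$. Hence $\mathcal HF_\alpha(0)=C_\alpha$ and $(1-r^2)^\alpha(\mathcal HF_\alpha)'(r)=D_\alpha(r)$. Since $\norm{F_\alpha}=1$, equality follows. For uniqueness, attaining the norm forces $f(0)=0$. It also forces equality in the estimate $\abs{\int_0^1(1-t)f'}\le C_\alpha$, which requires $f'(t)=\lambda(1-t^2)^{-\alpha}$ on $[0,1)$, and the identity theorem then gives $f=\lambda F_\alpha$. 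The case where the supremum is not attained needs care: I would use a normal-families limit together with the $C_\alpha$ term, which is already forced at $z=0$.

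\textbf{Step 4: explicit formulas.} For $C_\alpha$, use the substitution $t^2=u$ and Beta integrals, regularized via $\int_0^1(1-t)(1-t^2)^{-\alpha}\,\dd t=\int_0^1(1+t)^{-\alpha}(1-t)^{1-\alpha}\,\dd t$ and analytic continuation in $\alpha$; this yields \eqref{eq:C-gamma}. For $D_\alpha(1)$, set $r=1$ asymptotically. With $P_\alpha(t)\sim c(1-t)^{1-\alpha}$, the rescaling $1-t=(1-r)s$ gives a limit of the form $2^\alpha\cdot 2^{-\alpha}\int_0^\infty s^{1-\alpha}(1+s)^{-2}\,\dd s=B(2-\alpha,\alpha)=\pi(\alpha-1)/\sin(\pi(\alpha-1))\cdot\frac1{\alpha-1}$, which matches \eqref{eq:D-boundary}. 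The main obstacle is proving that $D_\alpha$ is maximized at $r\to1$ when $\alpha\le3/2$, and that it strictly exceeds its boundary value for $3/2<\alpha<2$. For this I would expand $D_\alpha(r)-D_\alpha(1)$ to the next asymptotic order in $(1-r)$. The correction term has order $(1-r)^{2\alpha-3}$ or $(1-r)$, and its sign changes at $\alpha=3/2$. This settles the strict inequality for $\alpha>3/2$. For $\alpha\le3/2$, I would try to prove monotonicity of $D_\alpha$ by writing $P_\alpha(t)=\sum_n c_n t^{2n+1}$ with positive coefficients and comparing termwise with the $\alpha=3/2$ case, where $D_{3/2}$ should be computable in closed form.
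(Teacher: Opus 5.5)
Your reduction to $C_\alpha+\sup_r D_\alpha(r)$ has the right skeleton. Your uniqueness argument is correct: equality in $\abs{\int_0^1(1-t)f'(t)\dd t}\le C_\alpha$ forces $f'=\lambda(1-t^2)^{-\alpha}$ on $[0,1)$, and it works without any normal-families argument. The genuine gap is in the substantive part of the theorem, the analysis of $\sup D_\alpha$. There your plan rests on an incorrect prediction and on a comparison that cannot work.

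For $3/2<\alpha<2$, the first correction to $D_\alpha(1)$ is neither of order $(1-r)^{2\alpha-3}$ nor of order $(1-r)$; it is $2^\alpha d_\alpha(1-r)^{\alpha-1}$. The missing idea is the splitting $P_\alpha=Q_\alpha+d_\alpha-E_\alpha$, where:
\begin{itemize}
\item the singular part is $Q_\alpha(t)=(1-t^2)^{1-\alpha}/(2(\alpha-1))$;
\item the remainder is $E_\alpha(t)=\int_t^1(1-s)(1-s^2)^{-\alpha}\dd s=O((1-t)^{2-\alpha})$;
\item the constant is $d_\alpha=C_\alpha-\frac1{2(\alpha-1)}=\frac{\sqrt\pi\,\Gamma(1-\alpha)}{2\Gamma(3/2-\alpha)}$. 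This is exactly the Gamma term your Step 4 produces, and its sign flips at $\alpha=3/2$.
\end{itemize}
After the substitution $y=(1-t)/(1-rt)$, the $Q_\alpha$-part equals $D_\alpha(1)+O(1-r)$, because its $r$-derivative is bounded. The $E_\alpha$-part is $O(1-r)$. The constant contributes $d_\alpha J_\alpha(r)$ with $J_\alpha(r)\sim2^\alpha(1-r)^{\alpha-1}$, which dominates since $\alpha-1<1$. So near $r=1$ the sign of $D_\alpha(r)-D_\alpha(1)$ is the sign of $d_\alpha$.

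For $1<\alpha\le3/2$, a termwise power-series comparison cannot work. Each term satisfies $(1-r^2)^\alpha\int_0^1t^{2n+2}(1-rt)^{-2}\dd t\le(1+r)^\alpha(1-r)^{\alpha-1}$, which tends to $0$ at $r=1$. So the boundary value is carried entirely by the tail. Monotonicity of $D_\alpha$ is neither obtainable this way nor needed. What works is a global bound. Since $d_\alpha\le0\le E_\alpha$, you get $P_\alpha\le Q_\alpha$. In the $y$-variable, the integrand $(1+r)^\alpha(1-y)y^{1-\alpha}[2-(1+r)y]^{1-\alpha}(1-ry)^{2\alpha-3}$ is pointwise at most its $r=1$ limit, because $1-\alpha<0$ and $2\alpha-3\le0$. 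Hence $D_\alpha(r)\le B(2-\alpha,\alpha)/(\alpha-1)=\pi/\sin(\pi(\alpha-1))$.

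There is a second, smaller gap. You need $1+J_\alpha^*<C_\alpha+D_\alpha^*$ strictly, both for the norm formula and to force $f(0)=0$ in the uniqueness step. Termwise domination of $\mathcal H1$ by $\mathcal HF_\alpha$ fails already at the origin: $\mathcal H1(0)=1>C_\alpha=\mathcal HF_\alpha(0)$ for $\alpha<3/2$ (indeed $C_\alpha\to\log2$ as $\alpha\to1^+$). Compare the totals instead: $J_\alpha(r)\le(1-r^2)\int_0^1(1-rt)^{-2}\dd t=1+r<2$, while $D_\alpha^*\ge D_\alpha(1)=\pi/\sin(\pi(\alpha-1))\ge\pi>3$.

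Finally, in your boundary computation the constant in $P_\alpha(t)\sim c(1-t)^{1-\alpha}$ is $c=2^{-\alpha}/(\alpha-1)$, and $B(2-\alpha,\alpha)=(\alpha-1)\pi/\sin(\pi(\alpha-1))$. Your chain misplaces the factor $1/(\alpha-1)$, although the final value is right, and the rescaling limit still needs a dominated-convergence justification.
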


Our second main result concerns the logarithmically weighted target and
exhibits a different critical parameter.

\begin{theorem} \label{thm:log-main}
Let $1<\alpha<2$.  Then
\begin{equation}\label{eq:log-exact}
 \norm{\mathcal H}_{\mathcal B^\alpha\to\mathcal B^\alpha_{\log}}
 =
 \begin{cases}
 \displaystyle\frac32,
   &1<\alpha\leq\frac43,\\[6pt]
 \displaystyle C_\alpha+
 \frac{\sqrt\pi\,\Gamma(2-\alpha)}
      {4\Gamma(5/2-\alpha)},
   &\frac43<\alpha<2.
 \end{cases}
\end{equation}
At $\alpha=4/3$ the two expressions agree.

The norm-attaining functions of norm one are precisely
\begin{equation}\label{eq:log-extremals}
 \begin{cases}
 f(z)=\lambda,&1<\alpha<4/3,\\[3pt]
 f(z)=\lambda\bigl(a+(1-a)F_{4/3}(z)\bigr),
   \quad 0\leq a\leq1,&\alpha=4/3,\\[3pt]
 f(z)=\lambda F_\alpha(z),&4/3<\alpha<2,
 \end{cases}
 \qquad \lambda\in\T.
\end{equation}
\end{theorem}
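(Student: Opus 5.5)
The plan is the same two-step scheme as for \cref{thm:main}: a pointwise majorization of $f$ on $[0,1)$, and then an affine optimization in the single parameter $a=\abs{f(0)}$. The first step is exactly as in the proof of \cref{thm:main}. Let $\norm f_{\mathcal B^\alpha}\le1$ and put $a=\abs{f(0)}\in[0,1]$. Then $\abs{f'(s)}\le(1-a)(1-s^2)^{-\alpha}$, so
\[
 \abs{f(t)}\le a+(1-a)P_\alpha(t),\qquad 0\le t<1 .
\]
By Fubini, $\int_0^1P_\alpha(t)\dd t=\int_0^1(1-s)(1-s^2)^{-\alpha}\dd s=C_\alpha$, and therefore $\abs{\mathcal Hf(0)}\le a+(1-a)C_\alpha$.

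For the derivative, $(\mathcal Hf)'(z)=\int_0^1 t f(t)(1-tz)^{-2}\dd t$ and $\abs{1-tz}\ge1-t\abs z$. Hence, writing $w(r)$ for the logarithmic $\alpha$-weight of $\mathcal B^\alpha_{\log}$ at $\abs z=r$ (recalled in Section~\ref{sec:preliminaries}), the weighted derivative is at most
\[
 a\,w(r)\int_0^1\frac{t\dd t}{(1-rt)^2}+(1-a)\,w(r)\int_0^1\frac{tP_\alpha(t)}{(1-rt)^2}\dd t .
\]
Taking suprema over $r$ separately in each term gives
\[
 \norm{\mathcal Hf}_{\mathcal B^\alpha_{\log}}\le a(1+L_1)+(1-a)(C_\alpha+L_\alpha),
\]
where $L_1$ and $L_\alpha$ are the two weighted suprema. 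The right-hand side is affine in $a$, so the norm is at most $\max\{1+L_1,\;C_\alpha+L_\alpha\}$.

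Sharpness comes from two test functions. The constant $f\equiv1$ gives $\mathcal H1(z)=-\log(1-z)/z$, and $f=F_\alpha$ gives $\mathcal HF_\alpha$, whose value and derivative along $[0,1)$ realize the bounds above exactly. So the norm is exactly this maximum, provided $L_1$ and $L_\alpha$ are evaluated correctly.

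The core computation is to show that the logarithmic factor makes the supremum in both $L_1$ and $L_\alpha$ attained at $r=0$. At $r=0$ the weight equals $1$, which gives $L_1=\int_0^1t\dd t=\tfrac12$ and
\[
 L_\alpha=\int_0^1tP_\alpha(t)\dd t=\tfrac12\int_0^1(1-s^2)^{1-\alpha}\dd s=\tfrac14B\bigl(\tfrac12,2-\alpha\bigr)=\frac{\sqrt\pi\,\Gamma(2-\alpha)}{4\Gamma(5/2-\alpha)} .
\]
To prove that $r=0$ is the maximizer, I would expand $(1-rt)^{-2}=\sum(n+1)r^nt^n$ and compare the moments $\int t^{n+1}P_\alpha(t)\dd t$, using the explicit $P_\alpha$. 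Combined with the decay of $(1-r^2)^\alpha$ and the logarithmic denominator, the aim is to show that the weighted function is nonincreasing in $r$, or at least never exceeds its value at $0$. I expect this monotonicity in $r$ to be the main obstacle. Unlike the unweighted $D_\alpha$ in \cref{thm:main}, which can peak in the interior, here the logarithm must dominate uniformly for all $\alpha<2$, where $P_\alpha(t)\sim c(1-t)^{1-\alpha}$ blows up near $1$. I would first try termwise comparison of the power series; failing that, I would treat $r$ near $1$ by the asymptotics of $D_\alpha$ from \cref{thm:main}, which tend to a finite limit killed by the logarithm, and treat moderate $r$ by a direct derivative estimate.

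With $L_1$ and $L_\alpha$ in hand, it remains to compare $3/2$ with $C_\alpha+L_\alpha$. Using \eqref{eq:C-gamma}, I would show that $g(\alpha)=C_\alpha+L_\alpha-\tfrac32$ is strictly increasing on $(1,2)$ (monotonicity in $\alpha$ of the defining integrals suffices) and vanishes at $\alpha=4/3$, by direct gamma evaluation. This yields the two cases and their agreement at $4/3$.

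For the extremals, equality forces equality in the affine bound. For $\alpha<4/3$ that requires $a=1$, so $f$ is the constant $\lambda$. For $\alpha>4/3$ it requires $a=0$ together with $\abs{f'(s)}=(1-s^2)^{-\alpha}$ with a constant phase on $[0,1)$, because the supremum is attained at $r=0$ via $\int tf$. By the identity theorem this gives $f'=\lambda F_\alpha'$, hence $f=\lambda F_\alpha$. At $\alpha=4/3$ every $a$ is allowed, and the same phase argument, with $f(0)$ and $f'$ sharing the phase $\lambda$, gives the family $\lambda\bigl(a+(1-a)F_{4/3}\bigr)$.
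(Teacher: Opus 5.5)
There is a genuine gap, and it lies in what you call the core computation. Your argument needs $\sup_{0\le r<1}D_\alpha(r)/L(r)=D_\alpha(0)$ for every $1<\alpha<2$; this is your $L_\alpha$, the paper's $K_\alpha^{(1)}$. This claim is false for $\alpha$ close to $1$ (see \cref{rem:max-zero-false}). At the endpoint $\alpha=1$ one has $D_1(0)=\frac12$ and $D_1(1-\delta)=\log(2/\delta)+o(1)$. Hence
\[
 \frac{D_1(1-\delta)}{L(1-\delta)}
 =\frac12+\frac{\log2-\frac12+o(1)}{1+2\log(1/\delta)}>\frac12
\]
for small $\delta$, because $\log2>\frac12$. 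Fix such an $r$. Continuity of $D_\alpha(r)$ and $D_\alpha(0)$ in $\alpha$ then gives $D_\alpha(r)/L(r)>D_\alpha(0)$ for all $\alpha>1$ close to $1$. Your fallback near $r=1$ does not help. The boundary value $\pi/\sin(\pi(\alpha-1))$ blows up as $\alpha\downarrow1$, and $D_\alpha(r)$ tracks $D_1(r)\approx\log\frac{2}{1-r}$ over a long range of $r$ before it saturates. On that range the ratio already exceeds $D_\alpha(0)$. So no termwise or derivative argument can prove your claim on all of $(1,2)$, and $C_\alpha+D_\alpha(0)$ is not the second component near $\alpha=1$. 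As a result, your treatment of $1<\alpha<4/3$ is unsupported. So is your extremal argument there, which needs the second component to be strictly below $3/2$ in order to force $a=1$.

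For $1<\alpha\le4/3$ you do not need the exact value of $K_\alpha^{(1)}$, only the bound $C_\alpha+K_\alpha^{(1)}\le3/2$, strict for $\alpha<4/3$. The paper gets this by interpolation. It writes
\[
 D_\alpha(r)=\int_{0<s<t<1}\frac{t}{(1-rt)^2}\Bigl(\frac{1-r^2}{1-s^2}\Bigr)^{\alpha}\dd s\,\dd t
\]
and applies H\"older with $\theta=3(\alpha-1)$ to obtain $D_\alpha\le D_1^{1-\theta}D_{4/3}^{\theta}$. The endpoint $\alpha=1$ is handled by an explicit change of variables giving $\sup_rD_1(r)/L(r)\le\log2$. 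The endpoint $\alpha=4/3$ is handled by the coefficient comparison. That is where your termwise power-series idea does work, but only for $\alpha\ge4/3$, and it needs a moment recurrence and some delicate rational inequalities that your proposal leaves open. Weighted AM--GM, convexity of $\alpha\mapsto C_\alpha$ with $C_1=\log2$, and the tie $C_{4/3}+D_{4/3}(0)=3/2$ then give
\[
 C_\alpha+K_\alpha^{(1)}\le(1-\theta)\,2\log2+\theta\cdot\tfrac32,
\]
which is $<3/2$ for $\theta<1$ since $2\log2<3/2$. The rest of your outline agrees with the paper: the affine reduction in $a$, the two test functions, $K_\alpha^{(0)}=\frac12$, monotonicity of $C_\alpha+D_\alpha(0)$ for $\alpha\ge4/3$, and the phase argument for the extremals.
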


The remainder of the paper is organized as follows.
Section~\ref{sec:preliminaries} collects the necessary preliminaries
and auxiliary estimates.  In Section~\ref{sec:bloch-norm}, we prove
Theorem~\ref{thm:main}.  Section~\ref{sec:log-target} is devoted to
the proof of Theorem~\ref{thm:log-main}.

\section{Preliminaries}
\label{sec:preliminaries}
We retain the notation introduced in Section~\ref{sec:introduction}.
Let $H(\D)$ denote the space of analytic functions on $\D$.
We write $A\asymp B$ if there exist constants $c,C>0$ such that $
cB\leq A\leq CB$. 

\subsection{Bloch-type spaces and logarithmically weighted spaces}

For $\alpha>0$, the $\alpha$-Bloch space $\mathcal B^\alpha$ consists
of all $f\in H(\D)$ such that
\begin{equation}\label{eq:Bloch-norm}
 \norm{f}_{\mathcal B^\alpha}
 =
 \abs{f(0)}+\norm{f}_{\alpha,*}<\infty,
 \qquad
 \norm{f}_{\alpha,*}
 =
 \sup_{z\in\D}
 (1-\abs z^2)^\alpha\abs{f'(z)}.
\end{equation}
The case $\alpha=1$ is the classical Bloch space.

For $0<\alpha<\infty$, set
\begin{equation}\label{eq:L-weight}
 L(r)=
 \log\frac{e}{(1-r)^2}
 =
 1-2\log(1-r),
 \qquad 0\leq r<1.
\end{equation}
The logarithmically weighted $\alpha$-Bloch space
$\mathcal B^\alpha_{\log}$ is equipped with the norm
\begin{equation}\label{eq:log-Bloch-norm}
 \norm{f}_{\mathcal B^\alpha_{\log}}
 =
 \abs{f(0)}
 +
 \sup_{z\in\D}
 \frac{(1-\abs z^2)^\alpha}{L(\abs z)}
 \abs{f'(z)}.
\end{equation}

\subsection{Auxiliary functions}

We first clarify the choice of branch in the definition of
$F_\alpha$ in \eqref{eq:F}.  Since
\[
 \operatorname{Re}(1-z^2)
 \geq 1-\abs z^2>0,
 \qquad z\in\D,
\]
the function $1-z^2$ takes its values in the right half-plane.
Hence $(1-z^2)^{-\alpha}$ admits an analytic branch on $\D$,
uniquely determined by the requirement that it be positive on
$[0,1)$.  Since this branch is analytic on the simply connected
domain $\D$, it has an analytic primitive there.  Hence the integral
in \eqref{eq:F} is independent of the path.

In addition to the functions $P_\alpha$, $D_\alpha$, and $F_\alpha$
defined in Section~\ref{sec:introduction}, we shall use 
\begin{equation}\label{eq:J}
 J_\alpha(r)
 =
 (1-r^2)^\alpha
 \int_0^1\frac{t}{(1-rt)^2}\,\dd t,
 \qquad 0\leq r<1.
\end{equation}
We also write
\begin{equation}\label{eq:J-D-star}
 J_\alpha^*
 =
 \sup_{0\leq r<1}J_\alpha(r),
 \qquad
 D_\alpha^*
 =
 \sup_{0\leq r<1}D_\alpha(r).
\end{equation}

The following elementary estimate will be used repeatedly.

\begin{lemma}\label{lem:pointwise-estimate}
Let $f\in\mathcal B^\alpha$, $c=f(0)$, and $s=\norm f_{\alpha,*}$.  Then
\begin{equation}\label{eq:primitive-bound}
 \abs{f(t)}\leq\abs c+sP_\alpha(t),\qquad 0\leq t<1,
\end{equation}
and
\begin{equation}\label{eq:f-L1}
 \int_0^1\abs{f(t)}\,\dd t\leq\abs c+sC_\alpha<\infty.
\end{equation}
Consequently, \eqref{eq:H-definition} defines an analytic function on $\D$,
and
\begin{equation}\label{eq:H-derivative}
 (\mathcal Hf)'(z)=\int_0^1\frac{tf(t)}{(1-tz)^2}\,\dd t.
\end{equation}
Moreover, $F_\alpha\in\mathcal B^\alpha$ with
\[
 \norm{F_\alpha}_{\mathcal B^\alpha}=1,
 \qquad F_\alpha(t)=P_\alpha(t)>0\quad(0<t<1).
\]
In particular, \eqref{eq:primitive-bound} is sharp when $f(0)=0$ and
$\norm{f}_{\alpha,*}\leq1$, since equality holds for $f=F_\alpha$ on
$(0,1)$.
\end{lemma}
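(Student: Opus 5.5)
The argument integrates $f'$ along the radius. Fix $t\in[0,1)$ and write $f(t)=c+\int_0^t f'(u)\dd u$. By the definition \eqref{eq:Bloch-norm} of $\norm{f}_{\alpha,*}$ we have $\abs{f'(u)}\leq s(1-u^2)^{-\alpha}$ for $0\leq u<1$. Therefore
\[
 \abs{f(t)}\leq\abs c+s\int_0^t(1-u^2)^{-\alpha}\dd u=\abs c+sP_\alpha(t),
\]
which is \eqref{eq:primitive-bound}.

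For \eqref{eq:f-L1}, we integrate \eqref{eq:primitive-bound} over $[0,1)$ and apply Fubini to the nonnegative integrand:
\[
 \int_0^1P_\alpha(t)\dd t=\int_0^1\int_u^1\dd t\,(1-u^2)^{-\alpha}\dd u=\int_0^1(1-u)(1-u^2)^{-\alpha}\dd u=C_\alpha .
\]
This constant is finite, because near $u=1$ the integrand equals $(1-u)^{1-\alpha}(1+u)^{-\alpha}$ and $1-\alpha>-1$ for $\alpha<2$.

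Next comes analyticity and the derivative formula. Fix $0<\rho<1$. For $\abs z\leq\rho$ and $t\in[0,1]$ we have $\abs{1-tz}\geq1-\rho$. Hence $\abs{f(t)/(1-tz)}\leq\abs{f(t)}/(1-\rho)$ and $\abs{tf(t)/(1-tz)^2}\leq\abs{f(t)}/(1-\rho)^2$, and both bounds are integrable by \eqref{eq:f-L1}. Standard results on holomorphic dependence of parameter integrals apply here: either Morera plus Fubini, or differentiation under the integral sign justified by dominated convergence. They show that $\mathcal Hf$ is analytic on $\abs z<\rho$, with derivative obtained by differentiating $(1-tz)^{-1}$ in $z$, which is \eqref{eq:H-derivative}. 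Letting $\rho\to1$ extends this to all of $\D$.

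It remains to treat $F_\alpha$. By the branch discussion above, $F_\alpha$ is analytic on $\D$ with $F_\alpha(0)=0$ and $F_\alpha'(z)=(1-z^2)^{-\alpha}$. Since $\abs{1-z^2}\geq1-\abs z^2$, we get $(1-\abs z^2)^\alpha\abs{F_\alpha'(z)}\leq1$. At $z=0$ the left side equals $1$, so $\norm{F_\alpha}_{\alpha,*}=1$ and $\norm{F_\alpha}_{\mathcal B^\alpha}=0+1=1$. Integrating along the real segment and using that the branch is positive on $[0,1)$ gives $F_\alpha(t)=P_\alpha(t)>0$ for $0<t<1$.

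The sharpness statement is then immediate: when $c=0$ and $s=1$, the bound $sP_\alpha(t)$ equals $F_\alpha(t)$. None of these steps is a real obstacle. The only points needing care are checking the integrability of $C_\alpha$, which uses $\alpha<2$, and justifying the interchange of differentiation and integration, which the uniform bound on compact subsets of $\D$ handles.
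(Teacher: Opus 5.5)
Your proposal is correct and follows essentially the same route as the paper. The shared steps are: radial integration of the bound $\abs{f'(u)}\leq s(1-u^2)^{-\alpha}$; Tonelli to identify $\int_0^1P_\alpha=C_\alpha$, which is finite since $\alpha<2$; the uniform majorant on $\abs z\leq\rho$ to justify analyticity and differentiation under the integral; and $\abs{1-z^2}\geq1-\abs z^2$ to get $\norm{F_\alpha}_{\mathcal B^\alpha}=1$ (you observe equality at $z=0$, the paper notes it along all of $[0,1)$, and either suffices).
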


\begin{proof}
For $0\leq u<1$,
$\abs{f'(u)}\leq s(1-u^2)^{-\alpha}$.  Integration along the real segment
gives \eqref{eq:primitive-bound}.  Using Fubini's theorem, we have
\[
 \int_0^1P_\alpha(t)\,\dd t
 =\int_0^1\int_0^t(1-u^2)^{-\alpha}\,\dd u\,\dd t
 =\int_0^1(1-u)(1-u^2)^{-\alpha}\,\dd u=C_\alpha.
\]
Near $u=1$, the last integrand is comparable to
$2^{-\alpha}(1-u)^{1-\alpha}$, which is integrable exactly when $\alpha<2$.
This proves \eqref{eq:f-L1}.

 For $\abs z\leq\rho$ and $0<\rho<1$,
\[
 \frac{1}{\abs{1-tz}}\leq\frac1{1-\rho},\qquad
 \frac{t}{\abs{1-tz}^2}\leq\frac1{(1-\rho)^2}.
\]
The majorant in \eqref{eq:f-L1} therefore justifies local uniform convergence
of the integral and differentiation under the integral sign, yielding
\eqref{eq:H-derivative}.

Since $F_\alpha(0)=0$ and $F_\alpha'(z)=(1-z^2)^{-\alpha}$,
\[
 (1-\abs z^2)^\alpha\abs{F_\alpha'(z)}
 =\left(\frac{1-\abs z^2}{\abs{1-z^2}}\right)^\alpha\leq1.
\]
Equality holds for every real $z=r\in[0,1)$, because
$\abs{1-r^2}=1-r^2$.  The identity $F_\alpha(t)=P_\alpha(t)$ for $0<t<1$ follows directly
from the definitions.
\end{proof}

\subsection{Beta and gamma functions}

We use the standard notation
\[
 B(a,b)
 =
 \int_0^1
 x^{a-1}(1-x)^{b-1}\,\dd x,
 \qquad a,b>0,
\]
and
\[
 B(a,b)
 =
 \frac{\Gamma(a)\Gamma(b)}{\Gamma(a+b)}.
\]
We also use Euler's reflection formula
\begin{equation}\label{eq:Euler-reflection}
 \Gamma(z)\Gamma(1-z)
 =
 \frac{\pi}{\sin(\pi z)},
 \qquad z\notin\mathbb Z.
\end{equation}
When needed, $B_x(a,b)$ denotes the incomplete beta integral
\[
 B_x(a,b)
 =
 \int_0^x
 u^{a-1}(1-u)^{b-1}\,\dd u,
 \qquad 0<x<1,
\]
whenever the integral is well defined.

\section{Norm of the Hilbert matrix
\texorpdfstring{$\norm{\mathcal H}_{\mathcal B^\alpha\to\mathcal B^\alpha}$}
{||H|| B-alpha to B-alpha}}
\label{sec:bloch-norm}

\subsection{Reduction of the norm problem}

The following reduction is the point at which the norm-estimate strategy
of Hu and Ye \cite{HuYe2025} becomes exact.  Following their idea of
estimating separately the value at the origin and the derivative
seminorm of $\mathcal Hf$, we keep track of the corresponding
contributions of $\abs{f(0)}$ and $\norm{f}_{\alpha,*}$.

The additional ingredient here is that the standard Bloch norm has the
$\ell^1$ form
\[
 \norm{f}_{\mathcal B^\alpha}
 =\abs{f(0)}+\norm{f}_{\alpha,*},
\]
while the sharp radial estimate in
Lemma~\ref{lem:pointwise-estimate} admits an extremal for the seminorm
component.  Consequently, the two resulting upper bounds can be attained
separately by explicit functions, which turns the preceding norm
estimates into an exact formula.

Let $f\in\mathcal B^\alpha$, and write
\[
 c=f(0),\qquad s=\norm{f}_{\alpha,*}.
\]
Then
\[
 \norm{f}_{\mathcal B^\alpha}=\abs c+s.
\] 

\begin{proposition} \label{prop:components}
For $1<\alpha<2$,
\begin{equation}\label{eq:component-formula}
 \opnorm{\mathcal H}
 =\max\{1+J_\alpha^*,\ C_\alpha+D_\alpha^*\}
\end{equation}
in the extended sense.  In particular, once $D_\alpha^*<\infty$ is known,
$\mathcal H$ is bounded.
\end{proposition}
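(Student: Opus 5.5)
The plan is to prove an upper bound by splitting $f$ into its constant part and its seminorm part, and then to show that each piece of the bound is attained by an explicit function.

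\emph{Upper bound.} Let $f\in\mathcal B^\alpha$ with $c=f(0)$ and $s=\norm f_{\alpha,*}$. By \eqref{eq:f-L1},
\[
\abs{\mathcal Hf(0)}=\abs{\int_0^1 f(t)\,\dd t}\leq \abs c+sC_\alpha .
\]
For the derivative we use \eqref{eq:H-derivative} and \eqref{eq:primitive-bound}. For $z\in\D$ with $\abs z=r$, the inequality $\abs{1-tz}\geq 1-rt$ gives
\[
(1-r^2)^\alpha\abs{(\mathcal Hf)'(z)}
\leq (1-r^2)^\alpha\int_0^1\frac{t\,(\abs c+sP_\alpha(t))}{(1-rt)^2}\,\dd t
=\abs c\,J_\alpha(r)+s\,D_\alpha(r).
\]
Taking the supremum over $r$ and adding the two estimates gives
\[
\norm{\mathcal Hf}_{\mathcal B^\alpha}\leq \abs c\,(1+J_\alpha^*)+s\,(C_\alpha+D_\alpha^*)\leq \max\{1+J_\alpha^*,C_\alpha+D_\alpha^*\}\,(\abs c+s).
\]
In the last step the right-hand side is a convex combination weighted by $\abs c$ and $s$, bounded by the larger coefficient times $\abs c+s=\norm f_{\mathcal B^\alpha}$. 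This holds in the extended sense. We also need $J_\alpha^*<\infty$: since $J_\alpha(r)\leq (1-r^2)^\alpha/(1-r)$ and $\alpha>1$, $J_\alpha$ is bounded on $[0,1)$. So once $D_\alpha^*<\infty$, the operator is bounded.

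\emph{Lower bound.} We test two functions.

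First, take $f\equiv1$, which has norm $1$. Then
\[
\mathcal H1(0)=1,\qquad (\mathcal H1)'(r)=\int_0^1\frac{t}{(1-rt)^2}\,\dd t,
\]
so evaluating at real $z=r$ gives $\norm{\mathcal H1}_{\mathcal B^\alpha}\geq 1+J_\alpha^*$.

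Second, take $f=F_\alpha$, which has norm $1$ by Lemma~\ref{lem:pointwise-estimate}. Since $F_\alpha=P_\alpha$ on $[0,1)$, we get
\[
\mathcal HF_\alpha(0)=\int_0^1P_\alpha=C_\alpha,
\]
using the Fubini computation in the proof of that lemma. At real $z=r$ the integrand in \eqref{eq:H-derivative} is positive and equals the majorant, so $(1-r^2)^\alpha(\mathcal HF_\alpha)'(r)=D_\alpha(r)$. Hence $\norm{\mathcal HF_\alpha}\geq C_\alpha+D_\alpha^*$. This also holds when $D_\alpha^*=\infty$, in which case the norm is infinite.

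\emph{Main point to check.} The delicate issue is the extended-sense bookkeeping. The key fact is that the upper bound has the linear form $\abs c A+sB$, so no cross term arises, and it is exactly this form that lets the two test functions realize the two coefficients. Beyond that, we only need to confirm $J_\alpha^*<\infty$ and that the real-axis evaluations are legitimate; both are immediate from the positivity of the integrands on $[0,1)$.
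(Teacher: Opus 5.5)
Your proposal is correct and follows the paper's proof. You use the same linear upper bound $\abs c\,(1+J_\alpha^*)+s\,(C_\alpha+D_\alpha^*)$ from the radial estimate, with the test functions $f\equiv1$ and $f=F_\alpha$ realizing the two coefficients, and your explicit check that $J_\alpha^*<\infty$ is a small addition the paper only supplies later, in the proof of Theorem~\ref{thm:main}.
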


\begin{proof}
Let $f\in\mathcal B^\alpha$, $c=f(0)$, and $s=\norm f_{\alpha,*}$.  By
\eqref{eq:f-L1},
\begin{equation}\label{eq:H0-bound}
 \abs{\mathcal Hf(0)}\leq\abs c+sC_\alpha.
\end{equation}
If $r=\abs z$, then $\abs{1-tz}\geq1-tr$, so
\begin{align}
 (1-\abs z^2)^\alpha\abs{(\mathcal Hf)'(z)}
 &\leq(1-r^2)^\alpha\int_0^1
 \frac{t\left(\abs c+sP_\alpha(t)\right)}{(1-tr)^2}\,\dd t\notag\\
 &=\abs c\,J_\alpha(r)+sD_\alpha(r).
\label{eq:Hstar-bound}
\end{align}
Consequently,
\[
 \norm{\mathcal Hf}_{\mathcal B^\alpha}
 \leq\abs c(1+J_\alpha^*)+s(C_\alpha+D_\alpha^*),
\]
which gives the upper bound in \eqref{eq:component-formula} because
$\norm f_{\mathcal B^\alpha}=\abs c+s$.

For $f\equiv1$, the preceding inequality becomes an equality for $z=r\in[0,1)$;
hence
\[
 \norm{\mathcal H1}_{\mathcal B^\alpha}=1+J_\alpha^*.
\]
For $f=F_\alpha$, \cref{lem:pointwise-estimate} gives $\norm{F_\alpha}_{\mathcal B^\alpha}=1$ and
$F_\alpha(t)=P_\alpha(t)$.  Thus
\[
 \mathcal HF_\alpha(0)=C_\alpha,
 \qquad
 (1-r^2)^\alpha(\mathcal HF_\alpha)'(r)=D_\alpha(r).
\]
For $f=F_\alpha$ and $z=r\in[0,1)$, equality also holds in
\eqref{eq:Hstar-bound}.  Hence 
\[
 \norm{\mathcal HF_\alpha}_{\mathcal B^\alpha}=C_\alpha+D_\alpha^*.
\]
This proves the reverse inequality.
\end{proof}

\subsection{A decomposition of
\texorpdfstring{$P_\alpha$}{P-alpha}
and the boundary behavior of
\texorpdfstring{$D_\alpha$}{D-alpha}}

Proposition \ref{prop:components} leaves a one-variable problem: determine the
size and boundary behavior of $D_\alpha$.  To study $D_\alpha$ near $r=1$, we first decompose $P_\alpha$ into an
explicit singular term, a constant term, and a remainder.

Define
\begin{equation}\label{eq:QdE}
 Q_\alpha(t)=\frac{(1-t^2)^{1-\alpha}}{2(\alpha-1)},\qquad
 d_\alpha=C_\alpha-\frac1{2(\alpha-1)},
\end{equation}
and
\begin{equation}\label{eq:E}
 E_\alpha(t)=\int_t^1(1-s)(1-s^2)^{-\alpha}\,\dd s.
\end{equation}

\begin{lemma}\label{lem:P-decomposition}
For $1<\alpha<2$ and $0\leq t<1$,
\begin{equation}\label{eq:P-decomposition}
 P_\alpha(t)=Q_\alpha(t)+d_\alpha-E_\alpha(t).
\end{equation}
Moreover,
\begin{equation}\label{eq:E-bound}
 0\leq E_\alpha(t)\leq\frac{(1-t)^{2-\alpha}}{2-\alpha},
\end{equation}
and
\begin{equation}\label{eq:d-gamma}
 d_\alpha=\frac{\sqrt\pi\,\Gamma(1-\alpha)}
 {2\Gamma(3/2-\alpha)}.
\end{equation}
In particular,
\[
 d_\alpha<0\quad(1<\alpha<3/2),\qquad
 d_{3/2}=0,
 \qquad d_\alpha>0\quad(3/2<\alpha<2).
\]
\end{lemma}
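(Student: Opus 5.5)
The decomposition comes from splitting the integrand. Write
\[
 (1-s^2)^{-\alpha}=s(1-s^2)^{-\alpha}+(1-s)(1-s^2)^{-\alpha}.
\]
The first piece has the explicit primitive $-Q_\alpha$, since
\[
 \frac{\dd}{\dd s}\,\frac{(1-s^2)^{1-\alpha}}{2(\alpha-1)}=s(1-s^2)^{-\alpha}.
\]
Hence
\[
 \int_0^t s(1-s^2)^{-\alpha}\,\dd s=Q_\alpha(t)-\frac{1}{2(\alpha-1)}.
\]
The second piece is integrable up to $1$ because $\alpha<2$, as already noted in the proof of Lemma~\ref{lem:pointwise-estimate}. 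Therefore
\[
 \int_0^t(1-s)(1-s^2)^{-\alpha}\,\dd s=C_\alpha-E_\alpha(t).
\]
Adding the two identities gives $P_\alpha=Q_\alpha+d_\alpha-E_\alpha$ with $d_\alpha=C_\alpha-\frac1{2(\alpha-1)}$, which is \eqref{eq:P-decomposition}.

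For \eqref{eq:E-bound}, nonnegativity is clear from the integrand. For the upper bound, write $(1-s)(1-s^2)^{-\alpha}=(1-s)^{1-\alpha}(1+s)^{-\alpha}$ and use $(1+s)^{-\alpha}\le1$. This gives
\[
 E_\alpha(t)\le\int_t^1(1-s)^{1-\alpha}\,\dd s=\frac{(1-t)^{2-\alpha}}{2-\alpha}.
\]

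The main step is the Gamma formula \eqref{eq:d-gamma}. A direct evaluation fails because $\int_0^1 s(1-s^2)^{-\alpha}\,\dd s$ diverges. I would therefore first prove the identity for $\operatorname{Re}\alpha<1$ and then continue analytically. For $\alpha<1$ all integrals converge. The substitution $x=s^2$ gives
\[
 \int_0^1(1-s^2)^{-\alpha}\,\dd s=\tfrac12B\bigl(\tfrac12,1-\alpha\bigr)=\frac{\sqrt\pi\,\Gamma(1-\alpha)}{2\Gamma(3/2-\alpha)},
\]
and also
\[
 \int_0^1 s(1-s^2)^{-\alpha}\,\dd s=\frac{1}{2(1-\alpha)}.
\]
Subtracting the second from the first yields, for $\alpha<1$,
\[
 C_\alpha=\frac{1}{2(\alpha-1)}+\frac{\sqrt\pi\,\Gamma(1-\alpha)}{2\Gamma(3/2-\alpha)}.
\]

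Next, the integral $C_\alpha=\int_0^1(1-s)^{1-\alpha}(1+s)^{-\alpha}\,\dd s$ converges locally uniformly for $\operatorname{Re}\alpha<2$, so it is holomorphic there. The right-hand side is meromorphic on the same region. Its apparent pole at $\alpha=1$ cancels: near $\alpha=1$ we have $\Gamma(1-\alpha)\sim -1/(\alpha-1)$ and $\Gamma(3/2-\alpha)\to\Gamma(1/2)=\sqrt\pi$, so the Gamma term behaves like $-\frac{1}{2(\alpha-1)}$. The identity theorem then extends the formula to $1<\alpha<2$. This gives \eqref{eq:d-gamma}, with the value at $\alpha=3/2$ understood by continuity, since $\Gamma(3/2-\alpha)$ has a pole there.

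The sign statements then follow from the Gamma factors. For $1<\alpha<2$ we have $1-\alpha\in(-1,0)$, so $\Gamma(1-\alpha)<0$. The factor $\Gamma(3/2-\alpha)$ is positive for $\alpha<3/2$ and negative for $3/2<\alpha<2$, because then $3/2-\alpha\in(-1/2,0)$. Hence $d_\alpha<0$ for $1<\alpha<3/2$ and $d_\alpha>0$ for $3/2<\alpha<2$. At $\alpha=3/2$, the quantity $1/\Gamma(3/2-\alpha)$ tends to $0$ while $\Gamma(1-\alpha)\to\Gamma(-1/2)$ stays finite, so $d_{3/2}=0$. This can be checked directly: $d_{3/2}=\int_0^1\frac{\dd}{\dd s}\bigl[s(1-s^2)^{-1/2}-(1-s^2)^{-1/2}\bigr]\dd s+1=0$.
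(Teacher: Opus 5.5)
Your proof is correct. The decomposition and the bound on $E_\alpha$ match the paper's argument; the paper differentiates $P_\alpha-Q_\alpha$ instead of splitting the integrand, but it is the same computation.

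You differ from the paper in how you evaluate $d_\alpha$. You prove the identity for real $\alpha<1$, where $\int_0^1(1-s^2)^{-\alpha}\,\dd s=\frac12B(\frac12,1-\alpha)$ and $\int_0^1 s(1-s^2)^{-\alpha}\,\dd s$ converge separately. You then extend it by the identity theorem, using that $C_\alpha$ is holomorphic on $\{\operatorname{Re}\alpha<2\}$. The cancellation at $\alpha=1$ is not actually needed, since $\{\operatorname{Re}\alpha<2\}\setminus\{1\}$ is connected.

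The paper stays in the range $1<\alpha<2$. It computes the regularized limit $\lim_{x\to1^-}\bigl(B_x(p,q)+(1-x)^q/q\bigr)$, with $p=\frac12$ and $q=1-\alpha$, directly. An integration by parts gives $qK_{p,q}+pB(p,q+1)=1$, and hence $d_\alpha=\frac{3/2-\alpha}{2(1-\alpha)}B(\frac12,2-\alpha)$, a convergent positive beta integral.

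Your route is shorter. It also explains where the formula comes from: it is the continuation of $\frac12B(\frac12,1-\alpha)-\frac1{2(1-\alpha)}$. It gives holomorphy, hence continuity, of $C_\alpha$ in $\alpha$ for free.

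The paper's route is purely real-variable. Its beta representation makes the sign pattern and $d_{3/2}=0$ immediate from the factor $\frac{3/2-\alpha}{1-\alpha}$. It needs neither the sign of $\Gamma$ on $(-1,0)$ and $(-\frac12,0)$ nor the pole of $\Gamma(3/2-\alpha)$.

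There is one small slip in your optional direct check at $\alpha=3/2$. The bracket $s(1-s^2)^{-1/2}-(1-s^2)^{-1/2}=-\sqrt{(1-s)/(1+s)}$ increases from $-1$ to $0$, so its derivative integrates to $1=C_{3/2}$. Since $d_{3/2}=C_{3/2}-1$, the constant in your formula should be $-1$, not $+1$. This does not affect your proof, because $d_{3/2}=0$ already follows from the continuity argument.
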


\begin{proof}
Direct differentiation gives
\[
 P_\alpha'(t)=(1-t^2)^{-\alpha},\qquad
 Q_\alpha'(t)=t(1-t^2)^{-\alpha},
\]
and therefore
\begin{equation}\label{eq:PQ-derivative}
 (P_\alpha-Q_\alpha)'(t)=(1-t)(1-t^2)^{-\alpha}.
\end{equation}
The right-hand side is integrable near $1$.  Evaluating
\eqref{eq:PQ-derivative} between $0$ and $t$ gives
\eqref{eq:P-decomposition}, with $d_\alpha$ as in \eqref{eq:QdE}.  Since
$(1+s)^{-\alpha}\leq1$,
\[
 E_\alpha(t)=\int_t^1(1-s)^{1-\alpha}(1+s)^{-\alpha}\,\dd s
 \leq\int_t^1(1-s)^{1-\alpha}\,\dd s,
\]
which is \eqref{eq:E-bound}.

It remains to compute the constant $d_\alpha$.  Put
$p=1/2$, $q=1-\alpha\in(-1,0)$, and $x=t^2$.  Then
\[
 P_\alpha(t)=\frac12B_x(p,q),
 \qquad
 Q_\alpha(t)=-\frac{(1-x)^q}{2q}.
\]
Consequently,
\begin{align*}
 2d_\alpha
&=\lim_{x\to1^-}\left(B_x(p,q)+\frac{(1-x)^q}{q}\right) \\
 &=\frac1q+\int_0^1\bigl(u^{p-1}-1\bigr)(1-u)^{q-1}\,\dd u
 =:I_{p,q}.
\end{align*}
This integral is convergent: near $u=0$ its integrand is
$O(u^{p-1})$, while near $u=1$ the factor $u^{p-1}-1$ is $O(1-u)$, so the
integrand is $O((1-u)^q)$ and $q>-1$.

To evaluate it without invoking the beta integral with a negative second
parameter, write
\[
 A_{p,q}:=\int_0^1 (u^{p-1}-1)(1-u)^{q-1}\,\dd u
 =B(p,q+1)-K_{p,q},
\]
where
\[
 K_{p,q}:=\int_0^1(1-u^p)(1-u)^{q-1}\,\dd u.
\]
Both integrals on the right are convergent.  Since $q+1>0$, the function
$(1-u)^q(1-u^p)$ tends to $0$ as $u\to1^-$ and equals $1$ at $u=0$.
Integrating its derivative therefore gives
\[
 -1=-qK_{p,q}-pB(p,q+1),
 \qquad\text{hence}\qquad
 qK_{p,q}+pB(p,q+1)=1.
\]
Because $I_{p,q}=q^{-1}+A_{p,q}$, we conclude that
\[
 I_{p,q}
 =\frac1q+B(p,q+1)-\frac{1-pB(p,q+1)}q
 =\frac{p+q}{q}B(p,q+1).
\]
With $p=1/2$ and $q=1-\alpha$, the gamma recurrence gives
\[
 d_\alpha
 =\frac12\frac{3/2-\alpha}{1-\alpha}B\left(\frac12,2-\alpha\right)
 =\frac{\sqrt\pi\,\Gamma(1-\alpha)}{2\Gamma(3/2-\alpha)}.
\]
The preceding beta representation extends continuously to $\alpha=3/2$,
where it vanishes, and its sign on either side of $3/2$ follows immediately.
\end{proof}

Define  
\begin{equation}\label{eq:M}
 M_\alpha(r)=(1-r^2)^\alpha\int_0^1
 \frac{t(1-t^2)^{1-\alpha}}{(1-rt)^2}\,\dd t.
\end{equation}

Substituting \eqref{eq:P-decomposition} into the definition of
$D_\alpha$ gives
\begin{equation}\label{eq:D-decomposition}
 D_\alpha(r)=\frac{M_\alpha(r)}{2(\alpha-1)}
 +d_\alpha J_\alpha(r)-R_\alpha(r),
\end{equation}
where
\begin{equation}\label{eq:R}
 R_\alpha(r)=(1-r^2)^\alpha\int_0^1
 \frac{tE_\alpha(t)}{(1-rt)^2}\,\dd t.
\end{equation} 

\begin{proposition}  \label{prop:D-boundary}
For $0\leq r<1$, the substitution
$y=(1-t)/(1-rt)$ gives
\begin{equation}\label{eq:M-transformed}
 M_\alpha(r)=(1+r)^\alpha\int_0^1
 (1-y)y^{1-\alpha}\,[2-(1+r)y]^{1-\alpha}
 (1-ry)^{2\alpha-3}\,\dd y.
\end{equation}
Furthermore,
\begin{equation}\label{eq:M-limit}
 \lim_{r\to1^-}M_\alpha(r)=2B(2-\alpha,\alpha).
\end{equation}
The function $D_\alpha$ is bounded on $[0,1)$, extends continuously to
$[0,1]$, and satisfies \eqref{eq:D-boundary}.  If
$1<\alpha\leq3/2$, then also
\begin{equation}\label{eq:M-domination}
 M_\alpha(r)\leq2B(2-\alpha,\alpha),\qquad 0\leq r<1.
\end{equation}
\end{proposition}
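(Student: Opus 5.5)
The plan is to verify \eqref{eq:M-transformed} by direct substitution, get \eqref{eq:M-limit} by dominated convergence, deduce the boundary value of $D_\alpha$ from \eqref{eq:D-decomposition} by showing the other two terms vanish at $r=1$, and prove \eqref{eq:M-domination} by a monotonicity argument in $r$.

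\emph{The substitution.} With $y=(1-t)/(1-rt)$, so that $t=(1-y)/(1-ry)$, one computes
\[
 1-t=\frac{(1-r)y}{1-ry},\qquad
 1+t=\frac{2-(1+r)y}{1-ry},\qquad
 1-rt=\frac{1-r}{1-ry},\qquad
 \dd t=-\frac{1-r}{(1-ry)^2}\,\dd y.
\]
Inserting these into \eqref{eq:M}, the powers of $1-r$ collect to $(1-r)^{-\alpha}$. This cancels against $(1-r^2)^\alpha$ and leaves $(1+r)^\alpha$. The powers of $1-ry$ collect to $(1-ry)^{2\alpha-3}$, and this gives \eqref{eq:M-transformed}.

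\emph{The limit \eqref{eq:M-limit}.} As $r\to1^-$ the integrand of \eqref{eq:M-transformed} tends pointwise to $2^{1-\alpha}y^{1-\alpha}(1-y)^{\alpha-1}$. Multiplying by $2^\alpha$ gives $2B(2-\alpha,\alpha)$. For a dominating function I would use:
\begin{itemize}
 \item $2-(1+r)y\geq2(1-y)$, so $[2-(1+r)y]^{1-\alpha}\leq(2(1-y))^{1-\alpha}$, since $1-\alpha<0$;
 \item $(1-ry)^{2\alpha-3}\leq1$ if $\alpha\geq3/2$, and $(1-ry)^{2\alpha-3}\leq(1-y)^{2\alpha-3}$ if $\alpha<3/2$.
\end{itemize}
This yields the majorants $C\,y^{1-\alpha}(1-y)^{2-\alpha}$ and $C\,y^{1-\alpha}(1-y)^{\alpha-1}$ respectively, both integrable for $1<\alpha<2$. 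The same majorants also give continuity of $M_\alpha$ on $[0,1)$.

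\emph{Behaviour of $D_\alpha$.} I use \eqref{eq:D-decomposition} and treat its three terms separately.
\begin{itemize}
 \item $J_\alpha$ is continuous on $[0,1)$. Since $\int_0^1 t(1-rt)^{-2}\dd t\lesssim(1-r)^{-1}$, we get $J_\alpha(r)\lesssim(1-r)^{\alpha-1}\to0$.
 \item For $R_\alpha$, use \eqref{eq:E-bound}. Because $2-\alpha<1$, the standard estimate gives $\int_0^1(1-t)^{2-\alpha}(1-rt)^{-2}\dd t\lesssim(1-r)^{1-\alpha}$. Hence $R_\alpha(r)\lesssim 1-r\to0$, and continuity on $[0,1)$ follows by dominated convergence.
\end{itemize}
Therefore $D_\alpha$ is continuous on $[0,1)$, hence bounded there, and
\[
 D_\alpha(r)\to\frac{B(2-\alpha,\alpha)}{\alpha-1}=\Gamma(2-\alpha)\Gamma(\alpha-1)=\frac{\pi}{\sin(\pi(\alpha-1))}.
\]
The middle equality uses $\Gamma(\alpha)=(\alpha-1)\Gamma(\alpha-1)$, and the last is \eqref{eq:Euler-reflection} with $z=\alpha-1$. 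This gives the continuous extension to $[0,1]$ and \eqref{eq:D-boundary}.

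\emph{The domination \eqref{eq:M-domination}.} I would show that, for each fixed $y\in(0,1)$, the $r$-dependent factor $(1+r)^\alpha[2-(1+r)y]^{1-\alpha}(1-ry)^{2\alpha-3}$ is nondecreasing in $r$. Its logarithmic derivative is
\[
 \frac{\alpha}{1+r}+\frac{(\alpha-1)y}{2-(1+r)y}+\frac{(3-2\alpha)y}{1-ry}.
\]
All three terms are nonnegative when $1<\alpha\leq3/2$. Hence $M_\alpha$ is nondecreasing on $[0,1)$, and \eqref{eq:M-limit} gives \eqref{eq:M-domination}.

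The main obstacle is choosing the dominating functions correctly in the two regimes $\alpha<3/2$ and $\alpha\geq3/2$. The monotonicity sign check is what forces the restriction $\alpha\leq3/2$.
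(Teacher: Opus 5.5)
Your proposal is correct and follows essentially the paper's route. It uses the same substitution, the same majorants for dominated convergence, and the decomposition \eqref{eq:D-decomposition} with $J_\alpha,R_\alpha\to0$; you differ only in deriving \eqref{eq:M-domination} from monotonicity of the integrand in $r$ (the paper compares the integrand at $r$ factor by factor with its $r=1$ limit) and in bounding $R_\alpha$ through \eqref{eq:E-bound} rather than by $R_\alpha\le C_\alpha J_\alpha$.

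One small fix: boundedness of $D_\alpha$ on $[0,1)$ follows from continuity together with the finite limit at $r=1$, not from continuity on $[0,1)$ alone.
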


\begin{proof}
The inverse change of variables is
\[
 t=\frac{1-y}{1-ry},\qquad
 \dd t=-\frac{1-r}{(1-ry)^2}\,\dd y,
\]
and
\[
 1-t^2=\frac{y(1-r)[2-(1+r)y]}{(1-ry)^2},
 \qquad
 1-rt=\frac{1-r}{1-ry}.
\]
Substitution gives \eqref{eq:M-transformed}.

Let $\Phi_{\alpha,r}$ denote the integrand, including the factor
$(1+r)^\alpha$.  If $1<\alpha\leq3/2$, then for $0<y<1$,
\[
 \frac{\Phi_{\alpha,r}(y)}{\Phi_{\alpha,1}(y)}
 =\left(\frac{1+r}{2}\right)^\alpha
 \left(\frac{2-(1+r)y}{2(1-y)}\right)^{1-\alpha}
 \left(\frac{1-ry}{1-y}\right)^{2\alpha-3}\leq1.
\]
Integration gives \eqref{eq:M-domination}; in particular, the limiting
integrand is an integrable majorant.  If
$3/2\leq\alpha<2$, use $1-ry\leq1$ and
$2-(1+r)y\geq2(1-y)$ to dominate by a constant multiple of
$y^{1-\alpha}(1-y)^{2-\alpha}$.  Dominated convergence now gives
\[
 \lim_{r\to1^-}M_\alpha(r)
 =2\int_0^1y^{1-\alpha}(1-y)^{\alpha-1}\,\dd y,
\]
which is \eqref{eq:M-limit}.

Since $0\leq E_\alpha(t)\leq E_\alpha(0)=C_\alpha$,
\begin{equation}\label{eq:R-J}
 0\leq R_\alpha(r)\leq C_\alpha J_\alpha(r).
\end{equation}
For $r>0$, direct integration gives
\begin{equation}\label{eq:J-integral}
 \int_0^1\frac{t}{(1-rt)^2}\,\dd t
 =\frac{r/(1-r)+\log(1-r)}{r^2}.
\end{equation}
Hence $J_\alpha(r)\to0$ as $r\to1^-$.  The transformed representation
\eqref{eq:M-transformed} and the majorants above show that $M_\alpha$ is
bounded near $1$; it is continuous on compact subintervals of $[0,1)$.
Thus \eqref{eq:D-decomposition} proves boundedness of $D_\alpha$.  Combining 
\eqref{eq:M-limit}, \eqref{eq:R-J}, and $J_\alpha(r)\to0$ yields
\begin{align*}
 \lim_{r\to1^-}D_\alpha(r)
 &=\frac{B(2-\alpha,\alpha)}{\alpha-1}
 =\Gamma(2-\alpha)\Gamma(\alpha-1)\\
 &=\frac{\pi}{\sin\left(\pi(\alpha-1)\right)},
\end{align*}
where the last equality follows from
\eqref{eq:Euler-reflection}.
\end{proof}
 
\subsection{Maximization of
\texorpdfstring{$D_\alpha$}{D-alpha}}

\begin{proposition} 
\label{prop:boundary-regime}
If $1<\alpha\leq3/2$, then
\[
 D_\alpha^*=\frac{\pi}{\sin\bigl(\pi(\alpha-1)\bigr)}.
\] 
\end{proposition}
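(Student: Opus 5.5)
Since $D_\alpha$ extends continuously to $[0,1]$ with $D_\alpha(1)=\pi/\sin(\pi(\alpha-1))$ by Proposition~\ref{prop:D-boundary}, we have $D_\alpha^*\geq D_\alpha(1)$. It therefore suffices to prove the pointwise bound
\[
 D_\alpha(r)\leq\frac{\pi}{\sin\bigl(\pi(\alpha-1)\bigr)}=\frac{B(2-\alpha,\alpha)}{\alpha-1},\qquad 0\leq r<1.
\]
The plan is to use the decomposition \eqref{eq:D-decomposition},
\[
 D_\alpha(r)=\frac{M_\alpha(r)}{2(\alpha-1)}+d_\alpha J_\alpha(r)-R_\alpha(r),
\]
and to bound each of the three terms separately, keeping only the first.

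For the first term we invoke the domination \eqref{eq:M-domination}, valid precisely for $1<\alpha\leq3/2$. It gives $M_\alpha(r)\leq 2B(2-\alpha,\alpha)$, hence
\[
 \frac{M_\alpha(r)}{2(\alpha-1)}\leq\frac{B(2-\alpha,\alpha)}{\alpha-1}.
\]
For the second term, Lemma~\ref{lem:P-decomposition} gives $d_\alpha\leq0$ in this range (strictly negative for $\alpha<3/2$ and zero at $\alpha=3/2$). Since $J_\alpha(r)\geq0$, it follows that $d_\alpha J_\alpha(r)\leq0$. For the third term, \eqref{eq:R-J} gives $R_\alpha(r)\geq0$, so $-R_\alpha(r)\leq0$. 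Adding the three estimates yields the desired pointwise bound. Taking the supremum over $r$ and combining with the boundary value from Proposition~\ref{prop:D-boundary} gives equality:
\[
 D_\alpha^*=D_\alpha(1)=\frac{\pi}{\sin\bigl(\pi(\alpha-1)\bigr)}.
\]

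The only genuinely nontrivial ingredient is the domination \eqref{eq:M-domination}, and this has already been established. If I were to recheck it, the key point is the sign of each factor in the integrand ratio $\Phi_{\alpha,r}/\Phi_{\alpha,1}$:
\begin{itemize}
\item $\bigl((1+r)/2\bigr)^\alpha\leq1$, since $1+r\leq2$;
\item the base $\bigl(2-(1+r)y\bigr)/\bigl(2(1-y)\bigr)$ is at least $1$, and the exponent $1-\alpha$ is negative, so this factor is at most $1$;
\item the base $(1-ry)/(1-y)$ is at least $1$, and the exponent $2\alpha-3$ is at most $0$ exactly when $\alpha\leq3/2$, so this factor is at most $1$.
\end{itemize}
This last factor is where the restriction $\alpha\leq 3/2$ enters, together with the sign condition $d_\alpha\leq0$, which holds on the same range. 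The argument thus requires no further analysis.
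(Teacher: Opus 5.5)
Your proof is correct and is essentially the paper's argument. The paper notes that $d_\alpha\leq0$ and $E_\alpha\geq0$ give $P_\alpha\leq Q_\alpha$ pointwise, hence $D_\alpha(r)\leq M_\alpha(r)/(2(\alpha-1))\leq B(2-\alpha,\alpha)/(\alpha-1)$; this is exactly your step of discarding the nonpositive terms $d_\alpha J_\alpha(r)$ and $-R_\alpha(r)$ before applying \eqref{eq:M-domination}, and the reverse inequality likewise comes from the boundary limit in Proposition~\ref{prop:D-boundary}.
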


\begin{proof}
By  Lemma \ref{lem:P-decomposition}, $d_\alpha\leq0$ and $E_\alpha\geq0$, so
$P_\alpha(t)\leq Q_\alpha(t)$.  Hence
\[
 D_\alpha(r)\leq\frac{M_\alpha(r)}{2(\alpha-1)}
 \leq\frac{B(2-\alpha,\alpha)}{\alpha-1}
 =\frac{\pi}{\sin\bigl(\pi(\alpha-1)\bigr)}.
\]
The reverse inequality for the supremum follows from the boundary limit in
\cref{prop:D-boundary}.
\end{proof}

\begin{proposition} 
\label{prop:first-correction}\label{prop:interior}
Let $3/2<\alpha<2$ and put $\delta=1-r$.  As $r\to1^-$,
\begin{align}
 \frac{M_\alpha(r)}{2(\alpha-1)}
 &=\frac{\pi}{\sin(\pi(\alpha-1))}+O(\delta),\label{eq:M-Odelta}\\
 J_\alpha(r)&=2^\alpha\delta^{\alpha-1}(1+o(1)),\label{eq:J-asymp}\\
 R_\alpha(r)&=O(\delta).\label{eq:R-Odelta}
\end{align}
Moreover, $M_\alpha'(r)=O(1)$ and $R_\alpha'(r)=O(1)$.
Consequently,
\begin{equation}\label{eq:D-correction}
 D_\alpha(r)=\frac{\pi}{\sin(\pi(\alpha-1))}
 +2^\alpha d_\alpha(1-r)^{\alpha-1}
 +o\bigl((1-r)^{\alpha-1}\bigr),
\end{equation}
and
\begin{equation}\label{eq:Dprime-asymp}
 D_\alpha'(r)
 =-2^\alpha(\alpha-1)d_\alpha(1-r)^{\alpha-2}(1+o(1)).
\end{equation}
In particular, \eqref{eq:interior-strict} holds and $D_\alpha$ attains its
maximum at some point of $(0,1)$.  Every interior maximizer $r_\alpha$
satisfies
\begin{equation}\label{eq:critical-equation}
 (1-r_\alpha^2)\int_0^1
 \frac{t^2P_\alpha(t)}{(1-r_\alpha t)^3}\,\dd t
 =
 \alpha r_\alpha\int_0^1
 \frac{tP_\alpha(t)}{(1-r_\alpha t)^2}\,\dd t.
\end{equation}
\end{proposition}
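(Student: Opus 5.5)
The plan is to work term by term in the decomposition \eqref{eq:D-decomposition},
\[
 D_\alpha=\frac{M_\alpha}{2(\alpha-1)}+d_\alpha J_\alpha-R_\alpha .
\]
First we prove the three asymptotics \eqref{eq:M-Odelta}, \eqref{eq:J-asymp} and \eqref{eq:R-Odelta}, together with the derivative bounds. Then we combine them.

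\textbf{The function $J_\alpha$.} This is explicit by \eqref{eq:J-integral}:
\[
 \int_0^1 \frac{t}{(1-rt)^2}\,\dd t=\delta^{-1}(1+o(1)),
 \qquad (1-r^2)^\alpha=2^\alpha\delta^\alpha(1+o(1)).
\]
This gives \eqref{eq:J-asymp}. Differentiating the closed form gives
\[
 J_\alpha'(r)=-2^\alpha(\alpha-1)\delta^{\alpha-2}(1+o(1)).
\]

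\textbf{The function $M_\alpha$.} We use the transformed representation \eqref{eq:M-transformed}. The integrand depends smoothly on $r$ through $(1+r)^\alpha$, $[2-(1+r)y]^{1-\alpha}$ and $(1-ry)^{2\alpha-3}$. Since $2\alpha-3>0$, the last factor has a bounded $r$-derivative. The $r$-derivative of $[2-(1+r)y]^{1-\alpha}$ is $O\bigl(y(1-y)^{-\alpha}\bigr)$, using $2-(1+r)y\geq2(1-y)$. Multiplied by $(1-y)y^{1-\alpha}$, this gives $O\bigl(y^{2-\alpha}(1-y)^{1-\alpha}\bigr)$, which is integrable because $\alpha<2$.

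Hence differentiation under the integral sign is justified uniformly in $r$, so $M_\alpha'=O(1)$. The mean value theorem together with \eqref{eq:M-limit} then gives \eqref{eq:M-Odelta}.

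\textbf{The function $R_\alpha$.} This is the main obstacle. The crude bound \eqref{eq:R-J} only gives $O(\delta^{\alpha-1})$, which is the same order as the main correction, so something sharper is needed. The plan is to use \eqref{eq:E-bound}, $E_\alpha(t)\lesssim(1-t)^{2-\alpha}$, and again substitute $y=(1-t)/(1-rt)$. With
\[
 1-t=\frac{y(1-r)}{1-ry},\qquad \frac{\dd t}{(1-rt)^2}=\frac{\dd y}{1-r},
\]
we get
\[
 R_\alpha(r)\lesssim \delta^\alpha\cdot\delta^{-1}\int_0^1\Bigl(\frac{y\delta}{1-ry}\Bigr)^{2-\alpha}\dd y
 =\delta\int_0^1 y^{2-\alpha}(1-ry)^{\alpha-2}\,\dd y .
\]
The last integral is bounded by $\int_0^1 y^{2-\alpha}(1-y)^{\alpha-2}\,\dd y<\infty$. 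This proves \eqref{eq:R-Odelta}.

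For $R_\alpha'$ we would write $R_\alpha$ in the $y$ variable as
\[
 (1+r)^\alpha\delta^{\alpha-1}\int_0^1 \frac{1-y}{1-ry}\,E_\alpha\Bigl(\frac{1-y}{1-ry}\Bigr)\dd y
\]
and differentiate in $r$. We would use $E_\alpha'(t)=-(1-t)(1-t^2)^{-\alpha}$ and the same majorants to show the derivative is $O(1)$. The term coming from the derivative of $\delta^{\alpha-1}$ is controlled because the remaining integral is itself $O(\delta^{2-\alpha})$, by the estimate just obtained.

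\textbf{Conclusion.} Combining the pieces with $\alpha-1<1$ gives \eqref{eq:D-correction}. For the derivative, $\delta^{\alpha-2}\to\infty$ dominates the $O(1)$ terms $M_\alpha'$ and $R_\alpha'$, which gives \eqref{eq:Dprime-asymp}.

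Since $d_\alpha>0$ by Lemma \ref{lem:P-decomposition}, \eqref{eq:D-correction} shows $D_\alpha(r)>D_\alpha(1)$ for $r$ close to $1$. This is \eqref{eq:interior-strict}. Since $D_\alpha$ is continuous on $[0,1]$ by Proposition \ref{prop:D-boundary}, its maximum over $[0,1]$ is attained, and the strict inequality excludes $r=1$.

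We also exclude $r=0$. Write $D_\alpha(r)=(1-r^2)^\alpha G(r)$ with $G(r)=\int_0^1 tP_\alpha(t)(1-rt)^{-2}\,\dd t$. Then $D_\alpha'(0)=G'(0)=2\int_0^1 t^2P_\alpha(t)\,\dd t>0$, so $r=0$ is not a maximizer. Hence any maximizer $r_\alpha$ lies in $(0,1)$ and satisfies $D_\alpha'(r_\alpha)=0$. Since
\[
 D_\alpha'=(1-r^2)^{\alpha-1}\bigl[(1-r^2)G'-2\alpha rG\bigr],
 \qquad G'(r)=2\int_0^1\frac{t^2P_\alpha(t)}{(1-rt)^3}\,\dd t,
\]
the condition $D_\alpha'(r_\alpha)=0$, after dividing by $2$, is exactly \eqref{eq:critical-equation}.
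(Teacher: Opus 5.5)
Your proposal is correct and follows the paper's proof essentially step by step. It uses the same splitting \eqref{eq:D-decomposition}, the closed form \eqref{eq:J-integral} for $J_\alpha$ and $J_\alpha'$, and dominated differentiation of \eqref{eq:M-transformed} combined with \eqref{eq:M-limit} for $M_\alpha$. It also uses the bound \eqref{eq:E-bound} for $R_\alpha$, and excludes the endpoints in the same way, via $d_\alpha>0$ and $D_\alpha'(0)>0$. The only real variation is that you estimate $R_\alpha$ and $R_\alpha'$ after the substitution $y=(1-t)/(1-rt)$. The paper instead stays in the $t$ variable. There, differentiating \eqref{eq:R} only hits $(1-r^2)^\alpha$ and $(1-rt)^{-2}$, and gives $\abs{R_\alpha'(r)}\leq CR_\alpha(r)/\delta+C\delta^\alpha\int_0^1x^{2-\alpha}(\delta+x)^{-3}\,\dd x=O(1)$ with no chain rule through $E_\alpha$.

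One justification in your $M_\alpha$ step is false as written. For $3/2<\alpha<2$ the factor $(1-ry)^{2\alpha-3}$ does \emph{not} have a bounded $r$-derivative: $\partial_r(1-ry)^{2\alpha-3}=-(2\alpha-3)y(1-ry)^{2\alpha-4}$ with $2\alpha-4<0$, and this blows up as $r\to1^-$, $y\to1^-$. The conclusion $M_\alpha'=O(1)$ survives only because of the factor $1-y$ in the integrand. Using $1-ry\geq1-y$ and $2-(1+r)y\geq2(1-y)$, this term is bounded by $Cy^{2-\alpha}(1-y)^{1+(1-\alpha)+(2\alpha-4)}=Cy^{2-\alpha}(1-y)^{\alpha-2}$, which is integrable since $\alpha>1$. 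This is exactly the cancellation the paper records as $x(\delta+x)^{\alpha-3}\leq x^{\alpha-2}$.

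The same care is needed in your sketch for $R_\alpha'$, where "the same majorants" is not quite enough on its own. Bound both $E_\alpha(t)$ and $t\abs{E_\alpha'(t)}$ by $C(1-t)^{1-\alpha}$. The differentiated term is then dominated by $Cy^{2-\alpha}(1-y)(1-ry)^{\alpha-3}\leq Cy^{2-\alpha}(1-y)^{\alpha-2}$, uniformly in $r$. With these two repairs, the argument is complete.
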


\begin{proof} 
\emph{Step 1: Estimates for $M_\alpha$, $J_\alpha$, and $R_\alpha$.}
Let $x=1-y$ and $\Phi_{\alpha,r}$ be the integrand in
\eqref{eq:M-transformed}.  For $r\in[1/2,1)$,
\[
 2-(1+r)y=\delta+(1+r)x,
 \qquad
 1-ry=\delta+rx,
\]
so both quantities are comparable to $\delta+x$, uniformly in $r$ and $y$.
Differentiating the logarithm gives
\[
 \frac{\partial_r\Phi_{\alpha,r}}{\Phi_{\alpha,r}}
 =\frac{\alpha}{1+r}
 +\frac{(\alpha-1)y}{2-(1+r)y}
 -\frac{(2\alpha-3)y}{1-ry}.
\]
Consequently,
\[
 \abs{\partial_r\Phi_{\alpha,r}(y)}
 \leq C_\alpha y^{1-\alpha}
 \left[x(\delta+x)^{\alpha-2}+x(\delta+x)^{\alpha-3}\right]
 \leq C_\alpha y^{1-\alpha}x^{\alpha-2}.
\]
The last function is integrable on $(0,1)$.  Dominated differentiation and
the fundamental theorem of calculus therefore give $M_\alpha'(r)=O(1)$ and
\eqref{eq:M-Odelta}.

Formula \eqref{eq:J-asymp} follows directly from \eqref{eq:J-integral}.  For
the remainder, \eqref{eq:E-bound} gives
$E_\alpha(t)\leq C(1-t)^{2-\alpha}$.  Since
$1-rt=\delta+r(1-t)\asymp\delta+(1-t)$ for $r\geq1/2$,
\begin{align*}
 R_\alpha(r)
 &\leq C\delta^\alpha\int_0^1
 \frac{x^{2-\alpha}}{(\delta+x)^2}\,\dd x\\
 &=C\delta\int_0^{1/\delta}
 \frac{u^{2-\alpha}}{(1+u)^2}\,\dd u=O(\delta).
\end{align*}
The last integral is uniformly bounded because $\alpha>1$.  Differentiating
\eqref{eq:R} for $r<1$ and estimating the two resulting terms in the same
way gives
\[
 \abs{R_\alpha'(r)}
 \leq C\frac{R_\alpha(r)}{\delta}
 +C\delta^\alpha\int_0^1
 \frac{x^{2-\alpha}}{(\delta+x)^3}\,\dd x=O(1).
\] 

 \noindent
\emph{Step 2: Asymptotics of $D_\alpha$ and $D_\alpha'$.}
The expansion \eqref{eq:D-correction} follows from
\eqref{eq:D-decomposition} and Step~1, because
$\delta=o(\delta^{\alpha-1})$ when $\alpha<2$.

For the derivative, write $j(\delta)=J_\alpha(1-\delta)$.  The exact formula
\eqref{eq:J-integral} yields
\[
 j(\delta)
 =\delta^{\alpha-1}\frac{(2-\delta)^\alpha}{1-\delta}
 +\delta^\alpha\log\delta\,
   \frac{(2-\delta)^\alpha}{(1-\delta)^2}.
\]
Direct differentiation gives
\[
 J_\alpha'(r)=-2^\alpha(\alpha-1)
 (1-r)^{\alpha-2}(1+o(1)).
\]
By Step~1, $M_\alpha'(r)=O(1)$ and
$R_\alpha'(r)=O(1)$.  Differentiating \eqref{eq:D-decomposition} now gives
\eqref{eq:Dprime-asymp}.  Since $d_\alpha>0$, the final assertion follows.

\medskip
\noindent
\emph{Step 3: Existence of an interior maximizer.}
Since $d_\alpha>0$, \eqref{eq:D-correction} implies
$D_\alpha(r)>D_\alpha(1)$ for every $r<1$ sufficiently close to $1$.  Thus
\eqref{eq:interior-strict} holds.  The continuous extension of $D_\alpha$ to
$[0,1]$ attains its maximum, and the maximum is not attained at $1$.
Moreover,
\[
 D_\alpha'(0)=2\int_0^1t^2P_\alpha(t)\,\dd t>0,
\]
so it is not attained at $0$ either.

For $0<r<1$, differentiation under the integral sign gives
\begin{align*}
 D_\alpha'(r)
 =2(1-r^2)^{\alpha-1}\Bigg[&(1-r^2)
 \int_0^1\frac{t^2P_\alpha(t)}{(1-rt)^3}\,\dd t 
 -\alpha r\int_0^1\frac{tP_\alpha(t)}{(1-rt)^2}\,\dd t\Bigg].
\end{align*}
At an interior maximizer the bracket vanishes, which is
\eqref{eq:critical-equation}.
\end{proof}

\subsection{Proof of Theorem~\ref{thm:main}}
\begin{proof}[Proof of Theorem~\ref{thm:main}]
By \cref{prop:D-boundary}, $D_\alpha^*<\infty$.  Hence
\cref{prop:components} already implies that $\mathcal H$ is bounded on
$\mathcal B^\alpha$.  We first compare the two quantities on the right-hand side of
\eqref{eq:component-formula}.

Since $\alpha>1$,
\[
 (1-r^2)^\alpha\leq 1-r^2,
\]
and
\[
 \int_0^1\frac{t}{(1-rt)^2}\,\dd t
 \leq
 \int_0^1\frac{1}{(1-rt)^2}\,\dd t
 =\frac{1}{1-r}.
\]
Therefore
\[
 J_\alpha(r)\leq 1+r<2,
 \qquad 0\leq r<1,
\]
and hence
\[
 J_\alpha^*\leq 2.
\]
On the other hand, by \cref{prop:D-boundary},
\[
 D_\alpha^*
 \geq D_\alpha(1)
 =\frac{\pi}{\sin\bigl(\pi(\alpha-1)\bigr)}
 \geq \pi.
\]
Since $C_\alpha>0$, it follows that
\[
 1+J_\alpha^*
 \leq 3
 <\pi
 \leq D_\alpha^*
 <C_\alpha+D_\alpha^*.
\]
Hence
\[
 1+J_\alpha^*
 <
 C_\alpha+D_\alpha^*.
\]
Therefore, \cref{prop:components} gives
\[
 \opnorm{\mathcal H}
 =C_\alpha+D_\alpha^*
 =C_\alpha+\sup_{0\leq r<1}D_\alpha(r),
\]
which proves \eqref{eq:exact-norm}.

Next, combining \eqref{eq:QdE} with \eqref{eq:d-gamma} gives
\eqref{eq:C-gamma}. 

We now characterize the norm-attaining functions of norm one.  Suppose that $
 \norm{f}_{\mathcal B^\alpha}=1$ 
and
\[
 \norm{\mathcal Hf}_{\mathcal B^\alpha}
 =C_\alpha+D_\alpha^*.
\]
Write
\[
 c=f(0),
 \qquad
 s=\norm{f}_{\alpha,*},
\]
so that
\[
 \abs c+s=1.
\]
The estimate in the proof of \cref{prop:components} yields
\[
 \norm{\mathcal Hf}_{\mathcal B^\alpha}
 \leq
 \abs c\bigl(1+J_\alpha^*\bigr)
 +s\bigl(C_\alpha+D_\alpha^*\bigr).
\]
Using $s=1-\abs c$, this becomes
\[
 \norm{\mathcal Hf}_{\mathcal B^\alpha}
 \leq
 C_\alpha+D_\alpha^*
 -
 \abs c\bigl(
 C_\alpha+D_\alpha^*-1-J_\alpha^*
 \bigr).
\]
Since
\[
 C_\alpha+D_\alpha^*-1-J_\alpha^*>0,
\]
equality forces
\[
 c=0,
 \qquad
 s=1.
\]

For such an $f$, we have
\[
 \abs{\mathcal Hf(0)}\leq C_\alpha,
 \qquad
 \norm{\mathcal Hf}_{\alpha,*}\leq D_\alpha^*.
\]
Their sum equals $C_\alpha+D_\alpha^*$, so equality must hold in both
estimates.  In particular,
\[
 \abs{\mathcal Hf(0)}=C_\alpha.
\]
Since $c=0$ and $s=1$, \eqref{eq:primitive-bound} gives
\[
 \abs{f(t)}\leq P_\alpha(t),
 \qquad 0\leq t<1.
\]
Hence
\[
 C_\alpha
 =
 \abs{\int_0^1 f(t)\,\dd t}
 \leq
 \int_0^1\abs{f(t)}\,\dd t
 \leq
 \int_0^1P_\alpha(t)\,\dd t
 =C_\alpha.
\]
Thus equality holds throughout.  Since
\[
 P_\alpha(t)-\abs{f(t)}\geq0,
\]
we obtain
\[
 \abs{f(t)}=P_\alpha(t)
 \qquad\text{for almost every }t\in(0,1).
\]

Choose $\lambda\in\T$ such that
\[
 \overline{\lambda}\int_0^1 f(t)\,\dd t
 =
 \abs{\int_0^1 f(t)\,\dd t}.
\]
Equality in the integral triangle inequality gives
\[
 \int_0^1
 \left(
 \abs{f(t)}
 -
 \operatorname{Re}\bigl(\overline{\lambda}f(t)\bigr)
 \right)\,\dd t
 =0.
\]
The integrand is nonnegative, and therefore it vanishes almost everywhere.
It follows that
\[
 f(t)=\lambda\abs{f(t)}
      =\lambda P_\alpha(t)
\]
for almost every $t\in(0,1)$.  By continuity, this identity holds for every
$t\in(0,1)$.  Since
\[
 P_\alpha(t)=F_\alpha(t),
 \qquad 0<t<1,
\]
the identity theorem yields
\[
 f=\lambda F_\alpha
 \qquad\text{on }\D.
\]
Conversely, for every $\lambda\in\T$, the function $\lambda F_\alpha$ has
unit norm and attains the operator norm by \cref{lem:pointwise-estimate,prop:components}.  This proves \eqref{eq:extremals}.

It remains to distinguish the two parameter regimes. If $1<\alpha\leq3/2$, then
\cref{prop:boundary-regime}, together with \eqref{eq:exact-norm},
gives \eqref{eq:closed-norm}.  At $\alpha=3/2$, we have
$d_{3/2}=0$, and hence $C_{3/2}=1$ by \eqref{eq:QdE}.
The special value stated in \cref{thm:main} follows.

Finally, suppose that $3/2<\alpha<2$.
By \cref{prop:D-boundary}, $D_\alpha$ extends continuously to $[0,1]$
and satisfies \eqref{eq:D-boundary}.  Moreover,
\cref{prop:first-correction} gives \eqref{eq:interior-strict} and shows
that $D_\alpha$ attains its maximum at some
$r_\alpha\in(0,1)$.  This completes the proof.
\end{proof}

\section{Norm of the Hilbert matrix
\texorpdfstring{$\norm{\mathcal H}_{\mathcal B^\alpha\to\mathcal B^\alpha_{\log}}$}
{||H|| B-alpha to B-alpha-log}}
\label{sec:log-target}

We now prove Theorem~\ref{thm:log-main}.  Proceeding as in
\cref{prop:components}, define
\begin{equation}\label{eq:log-components}
 K_\alpha^{(0)}
 =
 \sup_{0\leq r<1}\frac{J_\alpha(r)}{L(r)},
 \qquad
 K_\alpha^{(1)}
 =
 \sup_{0\leq r<1}\frac{D_\alpha(r)}{L(r)}.
\end{equation}
Exactly as in the proof of \cref{prop:components}, if
$c=f(0)$ and $s=\norm{f}_{\alpha,*}$, then
\[
 \abs{\mathcal Hf(0)}
 \leq
 \abs c+sC_\alpha
\]
and, with $r=\abs z$,
\[
 \frac{(1-\abs z^2)^\alpha}{L(\abs z)}
 \abs{(\mathcal Hf)'(z)}
 \leq
 \abs c\,\frac{J_\alpha(r)}{L(r)}
 +
 s\,\frac{D_\alpha(r)}{L(r)}.
\]
Since $\norm{f}_{\mathcal B^\alpha}=\abs c+s$, it follows that
\begin{equation}\label{eq:log-component-formula}
 \norm{\mathcal H}_{\mathcal B^\alpha\to\mathcal B^\alpha_{\log}}
 =
 \max\left\{
 1+K_\alpha^{(0)},
 C_\alpha+K_\alpha^{(1)}
 \right\}.
\end{equation}
Indeed, the upper bound follows from the preceding estimates, while the
equality is obtained for 
$f\equiv1$ and $f=F_\alpha$, respectively.

For every $\alpha>1$,
\begin{equation}\label{eq:K0-half}
 K_\alpha^{(0)}=\frac12.
\end{equation}
The maximum is attained only at $r=0$.
Since $\alpha>1$ and $0<1-r^2\leq1$,
$J_\alpha(r)\leq J_1(r)$.  By \eqref{eq:J-integral}, for $r>0$,
\[
 J_1(r)=(1-r^2)\frac{r/(1-r)+\log(1-r)}{r^2}.
\]
A direct simplification gives
\begin{equation}\label{eq:J1-log-gap}
 \frac{L(r)}2-J_1(r)
 =\frac{-r-r^2/2-\log(1-r)}{r^2}\geq0,
\end{equation}
because $-\log(1-r)\geq r+r^2/2$, with strict inequality for $r>0$.
Thus $J_\alpha(r)/L(r)<1/2$ for $r>0$, while
$J_\alpha(0)=1/2$ and $L(0)=1$.

For the interpolation argument below, we extend the preceding notation
to the endpoint $\alpha=1$ by setting
\[
 P_1(t)=\operatorname{arctanh}t,
 \qquad
 C_1=\int_0^1\frac{1-t}{1-t^2}\,\dd t=\log2,
\]
and
\[
 D_1(r)
 =
 (1-r^2)\int_0^1
 \frac{tP_1(t)}{(1-rt)^2}\,\dd t,
 \qquad 0\leq r<1.
\]

\begin{lemma}\label{lem:alpha1-log-bound}
One has
\begin{equation}\label{eq:D1-log2}
 \sup_{0\leq r<1}\frac{D_1(r)}{L(r)}
 \leq\log2.
\end{equation}
\end{lemma}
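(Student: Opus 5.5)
We need to show $D_1(r)\le \log 2\,L(r)$ for all $r\in[0,1)$, where $L(r)=1-2\log(1-r)$. The plan is to compute or tightly bound $D_1$ explicitly, exploiting that $P_1(t)=\operatorname{arctanh} t=\tfrac12\log\frac{1+t}{1-t}$. We split this as
\[
 P_1(t)=\tfrac12\log(1+t)-\tfrac12\log(1-t),
\]
and treat the two pieces separately. This mirrors the decomposition of Lemma~\ref{lem:P-decomposition} at $\alpha=1$: the singular part is now logarithmic rather than a power.

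\emph{The bounded piece.} Since $\tfrac12\log(1+t)\le\tfrac12\log 2$, its contribution is at most $\tfrac12\log2\,J_1(r)$. By \eqref{eq:J1-log-gap}, $J_1(r)\le L(r)/2$, so this contribution is at most $\tfrac14\log2\,L(r)$.

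\emph{The singular piece.} We need
\[
 S(r)=\tfrac12(1-r^2)\int_0^1\frac{t\,(-\log(1-t))}{(1-rt)^2}\,\dd t .
\]
This can be evaluated in closed form. Integrate by parts using the primitive of $t/(1-rt)^2$ from \eqref{eq:J-integral}, or expand $-\log(1-t)=\sum_n t^n/n$ and integrate termwise. Either route gives an expression in $\log(1-r)$ and dilogarithm-free elementary terms. Heuristically, $S(r)\sim\tfrac12\cdot 2(1-r)\cdot\frac{-\log(1-r)}{1-r}=-\log(1-r)$ as $r\to1$, while $L(r)\sim -2\log(1-r)$. So $S/L\to 1/2$, which is below $\log 2\approx0.693$ but not below $\log2-\tfrac14\approx 0.443$.

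The crude bound on the first piece is therefore too lossy near $r=1$ (where $J_1\to0$ anyway) and near $r=0$ we also need care. The refined plan is to bound the total ratio directly:
\[
 \frac{D_1(r)}{L(r)}\le \frac{\tfrac12\log2\,J_1(r)+S(r)}{L(r)},
\]
using the exact $J_1$ and the exact $S$. At $r=0$ one can instead use the exact value $D_1(0)=\int_0^1 t\operatorname{arctanh}t\,\dd t=\tfrac12<\log2$.

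\emph{The main inequality.} It remains to verify the single-variable inequality
\[
 \tfrac12\log2\,J_1(r)+S(r)\le \log2\,\bigl(1-2\log(1-r)\bigr),
\]
which involves only elementary functions of $r$. The cleanest route is to set $u=-\log(1-r)\ge 0$ and write both sides as $A(r)+B(r)u$, with $A$ and $B$ explicit rational functions of $r$. It then suffices to compare coefficients: check $B(r)\le 2\log 2$ and $A(r)\le\log 2$. If one of these fails on part of $[0,1)$, compensate with the lower bound $u\ge r+r^2/2$, as was done for \eqref{eq:J1-log-gap}.

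I expect this last elementary inequality to be the main obstacle, since $\log 2$ leaves moderate slack only in the middle range of $r$. If the coefficient comparison fails, I would fall back to checking the monotonicity of $L(r)\log2-D_1(r)$ via its derivative, or to a power-series comparison in $r$ of both sides multiplied by $r^2$.
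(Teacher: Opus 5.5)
Your reduction to the one-variable inequality $\tfrac12\log2\,J_1(r)+S(r)\le\log2\,L(r)$ is valid, but you never prove that inequality, and the route you propose for it rests on a false claim. The singular integral is not dilogarithm-free. Expanding $(1-rt)^{-2}$ and using $\int_0^1t^k(-\log(1-t))\dd t=H_{k+1}/(k+1)$, where $H_k$ are the harmonic numbers, gives
\[
 \int_0^1\frac{t\,(-\log(1-t))}{(1-rt)^2}\dd t
 =\frac{1}{r^2}\left[\frac{u}{1-r}-\operatorname{Li}_2(r)-\frac{u^2}{2}\right],
 \qquad u=-\log(1-r),\quad \operatorname{Li}_2(r)=\sum_{k\geq1}\frac{r^k}{k^2}.
\]
So $S$ contains a dilogarithm and a $u^2$ term. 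It is not of the form $A(r)+B(r)u$ with rational $A,B$, and your coefficient comparison has nothing to act on. Nor can you simply drop the dilogarithm: after multiplying by $\tfrac12(1-r^2)$, the $u$-term and the $\operatorname{Li}_2$-term are each of size $1/(2r)$ near $r=0$ and cancel. The fallbacks (monotonicity, power series) are only named, not carried out.

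Two slips also steered you away from an easy finish. First, the bounded piece costs at most $\tfrac14\log2\,L(r)$, so the budget left for $S/L$ is $\tfrac34\log2\approx0.52$, not $\log2-\tfrac14$; this exceeds the limit $1/2$ of $S/L$. Second, $J_1(r)\to2$, not $0$, as $r\to1^-$; the factor $(1-r^2)^\alpha$ forces $J_\alpha\to0$ only when $\alpha>1$.

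What is missing is a majorization that makes the singular integral elementary, instead of an exact evaluation. Within your own split it suffices to use $t\le1$. The same expansion gives $\int_0^1\frac{-\log(1-t)}{(1-rt)^2}\dd t=\frac{u}{r(1-r)}$, hence for $0<r<1$
\[
 S(r)\leq\frac{(1+r)u}{2r}\leq\frac12+u=\frac12L(r).
\]
The second inequality is equivalent to $u\leq r/(1-r)$. Combined with your bound for the bounded piece and \eqref{eq:J1-log-gap}, this yields $D_1(r)\le(\tfrac12+\tfrac14\log2)L(r)<\log2\,L(r)$, since $\log2>2/3$.

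The paper instead substitutes $y=(1-t)/(1-rt)$ and then $x=\frac{1-r+(1+r)t}{(1-r)(1-t)}$, turning $D_1$ into an integral of $\log x$ against an explicit kernel. Dropping the factor $(x-1)/(x+1)\le1$ and integrating by parts gives $D_1(r)\le\log\frac{2}{1-r}$, and then $\frac{\log2+u}{1+2u}\le\log2$ because $2\log2>1$. Your repaired route is more elementary and needs no change of variables, but it loses a constant and leans on \eqref{eq:J1-log-gap}. The paper's bound is asymptotically sharp as $r\to1^-$ (compare Remark~\ref{rem:max-zero-false}).
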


\begin{proof}
The substitution $y=(1-t)/(1-rt)$ followed by replacing $1-y$ by $t$
gives
\begin{equation}\label{eq:D1-transformed}
 D_1(r)=\frac{1+r}{2}\int_0^1
 \frac{t}{1-(1-t)r}
 \log\frac{1-r+(1+r)t}{(1-r)(1-t)}\,\dd t.
\end{equation}
For the integral in \eqref{eq:D1-transformed}, put
\[
 x=\frac{1-r+(1+r)t}{(1-r)(1-t)}.
\]
Then
\[
 t=\frac{(1-r)(x-1)}{1+r+(1-r)x}
\]
and a direct calculation yields
\begin{align*}
 &\int_0^1\frac{t}{1-(1-t)r}
 \log\frac{1-r+(1+r)t}{(1-r)(1-t)}\,\dd t\\
 &\qquad=
 \int_1^\infty\frac{x-1}{x+1}
 \frac{2(1-r)}{[1+r+(1-r)x]^2}\log x\,\dd x\\
 &\qquad\leq
 \frac{2}{1+r}\log\frac{2}{1-r}.
\end{align*}
For the final estimate, drop the factor $(x-1)/(x+1)\leq1$ and integrate by
parts.  Therefore
\[
 \frac{D_1(r)}{L(r)}
 \leq\frac{\log2-\log(1-r)}{1-2\log(1-r)}
 \leq\log2,
\]
because $2\log2-1>0$.
\end{proof}

\begin{remark}
\label{rem:max-zero-false}
It is not true that $D_\alpha(r)/L(r)$ is maximized at $r=0$ for every
$1<\alpha<2$.  Indeed, at the endpoint $\alpha=1$, the preceding change of
variables also gives
\[
 D_1(1-\delta)=\log\frac{2}{\delta}+o(1)
 \qquad(\delta\downarrow0).
\]
To see this, subtract the integral obtained after replacing
$(x-1)/(x+1)$ by $1$; the error is bounded by
$O(\delta\log^2(1/\delta))+O(\delta\log(1/\delta))=o(1)$ after splitting at
$x=1/\delta$.  Hence
\[
 \frac{D_1(1-\delta)}{L(1-\delta)}
 =\frac12+\frac{\log2-1/2+o(1)}{L(1-\delta)}>\frac12=D_1(0)
\]
for all sufficiently small $\delta$.  Fixing one such $r<1$ and using
continuity in $\alpha$ shows that the same strict inequality persists for
some $\alpha>1$ sufficiently close to $1$.  Thus the proof of
\cref{thm:log-main} cannot rely on a global ``maximum at zero'' assertion
below the critical parameter.
\end{remark}

The following coefficient estimate will be used for
$4/3\leq\alpha<2$. 
Set
\begin{equation}\label{eq:D0-formula}
 D_\alpha(0)
 =\frac12\int_0^1(1-s^2)^{1-\alpha}\,\dd s
 =\frac{\sqrt\pi\,\Gamma(2-\alpha)}
 {4\Gamma(5/2-\alpha)}.
\end{equation}

\begin{lemma} \label{lem:coefficient-domination}
If $4/3\leq\alpha<2$, then
\begin{equation}\label{eq:D-over-L-zero}
 \frac{D_\alpha(r)}{L(r)}\leq D_\alpha(0) ,
 \qquad 0\leq r<1,
\end{equation}
and the inequality is strict for $r>0$.
\end{lemma}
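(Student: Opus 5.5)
The plan is to clear the weight $(1-r^2)^\alpha$ and compare power series coefficients. Write
\[
 \mu_k=\int_0^1t^kP_\alpha(t)\,\dd t,\qquad k\geq1,
\]
so that $D_\alpha(0)=\mu_1$. Expanding $t(1-rt)^{-2}=\sum_{n\geq0}(n+1)r^nt^{n+1}$ and integrating termwise (legitimate by positivity) gives
\[
 (1-r^2)^{-\alpha}D_\alpha(r)=\sum_{n\geq0}(n+1)\mu_{n+1}r^n .
\]
On the other side,
\[
 L(r)(1-r^2)^{-\alpha}=\Bigl(1+2\sum_{m\geq1}\frac{r^m}{m}\Bigr)\sum_{j\geq0}\binom{\alpha+j-1}{j}r^{2j}=\sum_{n\geq0}\ell_nr^n,
\]
and the $\ell_n$ are positive and explicit. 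Hence \eqref{eq:D-over-L-zero} follows from the coefficientwise inequality
\[
 (n+1)\mu_{n+1}\leq\mu_1\ell_n,\qquad n\geq0,
\]
and strictness for $r>0$ follows if at least one such inequality with $n\geq1$ is strict. For $n=1$ the inequality reads $2\mu_2<2\mu_1$, which is clearly strict.

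The key steps, in order, are as follows.
\begin{enumerate}[(i)]
\item Obtain usable two-sided control of $\mu_k$. Integrating by parts gives
\[
 \mu_k=\frac{1}{k+1}\int_0^1(1-t^{k+1})(1-t^2)^{-\alpha}\,\dd t .
\]
Alternatively, insert the decomposition $P_\alpha=Q_\alpha+d_\alpha-E_\alpha$ from Lemma~\ref{lem:P-decomposition}. The $Q_\alpha$ part gives beta integrals $\frac{1}{4(\alpha-1)}B\bigl(\tfrac{k+1}{2},2-\alpha\bigr)$. The constant part contributes $d_\alpha/(k+1)$, which is $\geq0$ here only for $\alpha\geq3/2$, so its sign must be tracked. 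The $E_\alpha$ part is nonpositive and can be dropped in an upper bound.
\item From this, derive an upper bound of the form
\[
 (n+1)\mu_{n+1}\leq\frac{\Gamma(n/2+1)\,(\ldots)}{\Gamma(n/2+3-\alpha)}+\frac{n+1}{n+2}\,d_\alpha ,
\]
which grows like $n^{\alpha-1}$.
\item Bound $\ell_n$ from below. Keep only the terms $2\binom{\alpha+j-1}{j}/(n-2j)$ together with the $m=0$ term; this yields roughly $n^{\alpha-1}\log n$ growth, with an explicit constant.
\item Verify the comparison for large $n$ by the logarithmic gain, and for the finitely many small $n$ by direct evaluation of the beta and gamma expressions. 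Monotonicity in $\alpha$ should reduce these checks to the endpoint $\alpha=4/3$.
\end{enumerate}

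The main obstacle is the small and intermediate range of $n$, where the logarithmic factor in $\ell_n$ gives no margin. This is where the restriction $\alpha\geq4/3$ must enter: for smaller $\alpha$ the constant $\mu_1=D_\alpha(0)$ is too small relative to the growth of $\mu_n$, compare Remark~\ref{rem:max-zero-false}.

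If coefficientwise comparison fails for some $n$, the first fallback is to multiply both series by $(1-r)$, or by $(1-r^2)$, before comparing. This partially cancels the $\binom{\alpha+j-1}{j}$ growth and often makes the sequences comparable by a monotone-ratio argument, showing that $(n+1)\mu_{n+1}/\ell_n$ is decreasing in $n$. That reduces everything to the single check at $n=0$, where equality $\mu_1=\mu_1\ell_0$ holds. To prove the monotonicity of this ratio, I would try log-convexity of the Gamma quotients in (ii) and (iii).
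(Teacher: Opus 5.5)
Your reduction is the paper's. With $\mu_{n+1}=m_n$, your $(n+1)\mu_{n+1}$ and $\ell_n$ are the paper's $q_n$ and $c_n$, and strictness via $2\mu_2<2\mu_1$ at $n=1$ is also how the paper concludes. The gap is the coefficient inequality itself, which is the whole content of the lemma. Steps (ii)--(iv) are only a plan, and the concrete ingredients you name do not work.

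\emph{The bound in (ii) is too lossy.} Discarding the $E_\alpha$-contribution fails at small $n$ already at the endpoint $\alpha=4/3$. For $n=2$ your bound equals $\tfrac{81}{40}+\tfrac34 d_{4/3}\approx1.540$. The target is $\mu_1\ell_2=\tfrac73D_{4/3}(0)\approx1.509$, and the true value is $3\mu_3=\tfrac{15}{7}D_{4/3}(0)\approx1.386$.

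\emph{The reduction to $\alpha=4/3$ is wrong.} As $\alpha\to2^-$ both sides are of order $(2-\alpha)^{-1}$. The exact normalized margin at $n=2$ is $(\mu_1\ell_2-3\mu_3)/(\alpha\mu_1)=\frac{2(\alpha-1)(2-\alpha)}{\alpha(5-2\alpha)}$. It equals $1/7$ at $\alpha=4/3$ but tends to $0$ as $\alpha\to2$, and the $n=1$ case degenerates the same way. So the worst case for small $n$ is the open end of the parameter range, where no finite set of numerical evaluations can certify the inequality. You also give no reason why your large-$n$ threshold would be uniform in $\alpha$.

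\emph{The fallback fails as stated.} Because $L(-1)=1-2\log2<0$, the singularity of $L(r)(1-r^2)^{-\alpha}$ at $r=-1$ makes $\ell_n$ oscillate with parity. For example, at $\alpha=4/3$ one has $\ell_5=\tfrac{22}{5}>\ell_6=\tfrac{347}{81}$, while $7\mu_7>6\mu_6$. Hence $(n+1)\mu_{n+1}/\ell_n$ is not decreasing. Moreover, the coefficient of $r^6$ in $(1-r)\bigl(\mu_1\sum_n\ell_nr^n-\sum_n(n+1)\mu_{n+1}r^n\bigr)$ is negative, so comparing after multiplying by $(1-r)$ fails too.

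What is missing is an argument that handles all $n$ and all $\alpha\in[4/3,2)$ at once through exact identities. Your integration-by-parts formula for $\mu_k$ is equivalent to the paper's recurrence $(n+2)\mu_{n+1}=B_n+n\mu_{n-1}$, where $B_n=\int_0^1t^n(1-t^2)^{1-\alpha}\dd t$. The paper proceeds as follows.
\begin{enumerate}[(a)]
\item It normalizes the even coefficients by $\mu_1A_m$, with $A_m=(\alpha)_m/m!$. The recurrence becomes $u_m=T_m+\beta_mu_{m-1}$, to be compared with $h_m=h_{m-1}+1/(\alpha+m-1)$.
\item It proves $0<\beta_m\le1$, which is one place where $\alpha\ge4/3$ is used.
\item It proves $T_m\le1-\beta_m+1/(\alpha+m-1)$ via the telescoping majorant $X_m\le c/(m+c)$. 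Then $h_m-u_m\ge\beta_m(h_{m-1}-u_{m-1})$ propagates the explicit $m=1$ gap by induction, uniformly in $\alpha$.
\item It reduces odd indices to even ones via $\mu_{2m+2}\le\mu_{2m+1}$ and $\ell_{2m+1}\ge\frac{2m+2}{2m+1}\ell_{2m}$.
\end{enumerate}
Without such a mechanism, or genuinely uniform-in-$\alpha$ estimates near $\alpha=2$, your outline does not establish the lemma.
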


\begin{proof}
Write
\[
 I_\alpha(r)=\int_0^1\frac{tP_\alpha(t)}{(1-rt)^2}\,\dd t
 =\sum_{n=0}^\infty q_nr^n,
 \qquad
 q_n=(n+1)m_n,
\]
where
\[
 m_n=\int_0^1t^{n+1}P_\alpha(t)\,\dd t.
\]
Also write
\begin{equation}\label{eq:c-generating}
 L(r)(1-r^2)^{-\alpha}=\sum_{n=0}^\infty c_nr^n.
\end{equation}
We shall prove the coefficientwise estimate
\begin{equation}\label{eq:coefficient-target}
 q_n\leq D_\alpha(0) c_n,
 \qquad n\geq0.
\end{equation}
Since all coefficients are nonnegative, \eqref{eq:coefficient-target} and
\eqref{eq:c-generating} imply
\[
 I_\alpha(r)\leq D_\alpha(0) L(r)(1-r^2)^{-\alpha},
\]
which is \eqref{eq:D-over-L-zero}.

For $a\in\C$ and $m\geq0$, set
\[
 (a)_0=1,
 \qquad
 (a)_m
 =
 a(a+1)\cdots(a+m-1),
 \quad m\geq1.
\]
In particular, put
\[
 A_m=\frac{(\alpha)_m}{m!}.
\]
From $L(r)=1+2\sum_{j\geq1}r^j/j$ one obtains
\begin{align}
 c_{2m}&=A_m+\sum_{j=1}^m\frac{A_{m-j}}j
 =A_m\left(1+\sum_{k=0}^{m-1}\frac1{\alpha+k}\right),
 \label{eq:c-even}\\
 c_{2m+1}&=2\sum_{j=0}^m\frac{A_{m-j}}{2j+1}.
 \label{eq:c-odd}
\end{align}

We first treat the even coefficients.  The identity
\begin{equation}\label{eq:moment-identity}
 2tP_\alpha(t)=(1-t^2)^{1-\alpha}
 -\frac{\dd}{\dd t}\bigl((1-t^2)P_\alpha(t)\bigr)
\end{equation}
and the boundary behavior
$(1-t^2)P_\alpha(t)=O((1-t)^{2-\alpha})\to0$ give
\begin{equation}\label{eq:moment-recurrence}
 (n+2)m_n=B_n+nm_{n-2},
 \qquad
 B_n=\int_0^1t^n(1-t^2)^{1-\alpha}\,\dd t,
\end{equation}
where the second term is omitted when $n=0$.  Since
$D_\alpha(0)=m_0=B_0/2$, for $m\geq0$,
\begin{equation}\label{eq:B-even-ratio}
 \frac{B_{2m}}{D_\alpha(0)}
 =2R_m,
 \qquad
 R_m=\frac{(1/2)_m}{(5/2-\alpha)_m}.
\end{equation}
Define
\[
 u_m=\frac{q_{2m}}{D_\alpha(0) A_m},
 \qquad
 h_m=\frac{c_{2m}}{A_m}
 =1+\sum_{k=0}^{m-1}\frac1{\alpha+k}.
\]
Then $u_0=h_0=1$, and \eqref{eq:moment-recurrence} gives, for $m\geq1$,
\begin{equation}\label{eq:u-recurrence}
 u_m=T_m+\beta_mu_{m-1},
\end{equation}
where
\begin{equation}\label{eq:T-beta}
 T_m=\frac{2m+1}{m+1}\frac{R_m}{A_m},
 \qquad
 \beta_m=
 \frac{m^2(2m+1)}{(m+1)(2m-1)(\alpha+m-1)}.
\end{equation}
For $m=1$ a direct calculation gives
\begin{equation}\label{eq:m1-gap}
 h_1-u_1=
 \frac{2(\alpha-2)(\alpha-1)}{\alpha(2\alpha-5)}>0.
\end{equation}
For $m\geq2$ and $\alpha\geq4/3$, one has $0<\beta_m\leq1$.  Indeed, after
clearing the positive denominator, the difference between the right- and
left-hand sides is increasing in $\alpha$; at $\alpha=4/3$ it equals
$(2m^2-2m-1)/3>0$.

We next establish
\begin{equation}\label{eq:T-key}
 T_m\leq1-\beta_m+\frac1{\alpha+m-1},
 \qquad m\geq2.
\end{equation}
Write
\begin{equation}\label{eq:X-product}
 X_m:=\frac{R_m}{A_m}
 =\prod_{k=0}^{m-1}
 \frac{(k+1/2)(k+1)}{(k+\alpha)(k+5/2-\alpha)}.
\end{equation}
If $4/3\leq\alpha\leq3/2$, set
\[
 c=\frac1{-2\alpha^2+5\alpha-1},
\]
while if $3/2\leq\alpha<2$, set
\[
 c=\frac{2\alpha^2-5\alpha+4}{2}.
\]
In both cases $c>0$, and for every $k\geq0$,
\begin{equation}\label{eq:factor-majorization}
 \frac{(k+1/2)(k+1)}{(k+\alpha)(k+5/2-\alpha)}
 \leq\frac{k+c}{k+c+1}.
\end{equation}
Indeed, after cross multiplication the difference between the right and left
numerators is
\[
 (c-c_0)k+cd-\frac12,
 \qquad
 c_0=\alpha^2-\frac52\alpha+2,
 \quad
 d=-\alpha^2+\frac52\alpha-\frac12.
\]
For $4/3\leq\alpha\leq3/2$ we have $cd=1/2$ and
\[
 c-c_0=
 \frac{(2-\alpha)(\alpha-1)(3-2\alpha)(2\alpha-1)}
 {2(-2\alpha^2+5\alpha-1)}\geq0.
\]
For $3/2\leq\alpha<2$ we have $c=c_0$ and
\[
 cd-\frac12=
 \frac{(2-\alpha)(\alpha-1)(2\alpha-3)(2\alpha-1)}4\geq0.
\]
Thus \eqref{eq:factor-majorization} holds, and telescoping gives
\begin{equation}\label{eq:X-bound}
 X_m\leq\frac{c}{m+c}.
\end{equation}
Hence
\[
 T_m\leq\frac{2m+1}{m+1}\frac{c}{m+c}.
\]
On the other hand,
\begin{equation}\label{eq:rhs-T}
 1-\beta_m+\frac1{\alpha+m-1}
 =\frac{2\alpha m^2+\alpha m-\alpha-m}
 {(m+1)(2m-1)(\alpha+m-1)}.
\end{equation}
It remains to prove that
\begin{equation}\label{eq:T-rational-comparison}
 \frac{2m+1}{m+1}\frac{c}{m+c}
 \leq
 1-\beta_m+\frac1{\alpha+m-1}.
\end{equation}
Indeed, together with the preceding estimate for $T_m$, this implies
\eqref{eq:T-key}.  By \eqref{eq:rhs-T}, the right-hand side of
\eqref{eq:T-rational-comparison} is
\[
 \frac{2\alpha m^2+\alpha m-\alpha-m}
 {(m+1)(2m-1)(\alpha+m-1)}.
\]

We verify \eqref{eq:T-rational-comparison} separately on the two
parameter ranges used in the definition of $c$.

Suppose first that $4/3\leq\alpha\leq3/2$.  Write
\[
 \alpha=\frac32-b,
 \qquad
 0\leq b\leq\frac16,
\]
and put $m=n+2$, where $n\geq0$.  In this case
\[
 c=\frac1{-2\alpha^2+5\alpha-1}
   =\frac1{2+b-2b^2}.
\]
A direct simplification gives
\begin{align*}
 &\left(
 1-\beta_m+\frac1{\alpha+m-1}
 \right)
 -
 \frac{2m+1}{m+1}\frac{c}{m+c}
 \\
 &\qquad=
 \frac{E_1(b,n)}
 {(n+3)(2n+3)
  \left(n+\frac52-b\right)
  \left((n+2)(2+b-2b^2)+1\right)},
\end{align*}
where
\begin{align*}
 E_1(b,n)={}&(4b^3-8b^2-b+2)n^3\\
 &+(26b^3-50b^2-\tfrac{11}{2}b+14)n^2\\
 &+(54b^3-100b^2-\tfrac{21}{2}b+\tfrac{61}{2})n\\
 &+(36b^3-64b^2-7b+20).
\end{align*}
Every factor in the denominator is positive for
$0\leq b\leq1/6$ and $n\geq0$.  Hence it is enough to prove
$E_1(b,n)\geq0$.  Using $b^2\leq b/6$ and discarding the positive
cubic terms, the four coefficients of $E_1(b,n)$ are bounded below,
respectively, by
\[
 \frac{29}{18},\qquad
 14-\frac{83}{36},\qquad
 \frac{61}{2}-\frac{163}{36},\qquad
 20-\frac{53}{18},
\]
and are therefore positive.  Thus
\eqref{eq:T-rational-comparison} holds for
$4/3\leq\alpha\leq3/2$.

Now suppose that $3/2\leq\alpha<2$.  Write
\[
 \alpha=2-b,
 \qquad
 0<b\leq\frac12,
\]
and again put $m=n+2$.  Here
\[
 c=\frac{2\alpha^2-5\alpha+4}{2}
   =\frac{2b^2-3b+2}{2}.
\]
A direct simplification gives
\begin{align*}
 &\left(
 1-\beta_m+\frac1{\alpha+m-1}
 \right)
 -
 \frac{2m+1}{m+1}\frac{c}{m+c}
 \\
 &\qquad=
 \frac{E_2(b,n)}
 {(n+3)(2n+3)(n+3-b)
  (2n+6-3b+2b^2)},
\end{align*}
where
\begin{align*}
 E_2(b,n)={}&8b(1-b)n^3
 +(4b^3-54b^2+50b+2)n^2\\
 &+(14b^3-113b^2+98b+8)n\\
 &+(12b^3-76b^2+63b+6).
\end{align*}
Again the denominator is positive.  Moreover, since
$b^2\leq b/2$, the four coefficients of $E_2(b,n)$ are bounded below
by
\[
 8b(1-b),\qquad
 23b+2,\qquad
 \frac{83}{2}b+8,\qquad
 25b+6,
\]
respectively.  Thus $E_2(b,n)>0$, and
\eqref{eq:T-rational-comparison} also holds for
$3/2\leq\alpha<2$. 
Consequently, \eqref{eq:T-key} follows.

Let $e_m=h_m-u_m$.  Since
$h_m=h_{m-1}+1/(\alpha+m-1)$, equations
\eqref{eq:u-recurrence} and \eqref{eq:T-key} imply
\[
 e_m=\beta_me_{m-1}+(1-\beta_m)h_{m-1}
 +\frac1{\alpha+m-1}-T_m
 \geq\beta_me_{m-1}.
\]
Together with \eqref{eq:m1-gap}, this proves $u_m\leq h_m$ and therefore
\eqref{eq:coefficient-target} for all even indices.

For odd indices, $m_n$ is strictly decreasing in $n$, so
\begin{equation}\label{eq:q-odd-even}
 q_{2m+1}\leq\frac{2m+2}{2m+1}q_{2m}.
\end{equation}
For $m=0$, $q_1<2q_0=2 D_\alpha(0)=D_\alpha(0) c_1$.  For $m\geq1$,
\eqref{eq:c-even}--\eqref{eq:c-odd} give
\begin{align*}
 &(2m+1)c_{2m+1}-(2m+2)c_{2m}\\
 &\qquad=2\left[mA_m-
 \sum_{j=1}^m\frac{m+j+1}{j(2j+1)}A_{m-j}\right].
\end{align*}
Because $\alpha>1$, the sequence $A_m$ is increasing and
$mA_m=(\alpha+m-1)A_{m-1}\geq mA_{m-1}$.  Set
\[
 S_m=\sum_{j=1}^m\frac{m+j+1}{j(2j+1)}.
\]
Now $S_1=1$ and
\[
 S_{m+1}-S_m
 =\sum_{j=1}^m\frac1{j(2j+1)}+\frac1{m+1}
 \leq\sum_{j=1}^m\frac1{j(j+1)}+\frac1{m+1}=1.
\]
Thus $S_m\leq m$.  Since $(A_m)_{m\geq0}$ is increasing, we have
$A_{m-j}\leq A_{m-1}$ for $1\leq j\leq m$, and hence
\[
 \sum_{j=1}^m
 \frac{m+j+1}{j(2j+1)}A_{m-j}
 \leq A_{m-1}S_m
 \leq mA_{m-1}
 \leq mA_m.
\]
Therefore
\[
 (2m+1)c_{2m+1}-(2m+2)c_{2m}\geq0,
\]
and consequently
\begin{equation}\label{eq:c-odd-even}
 c_{2m+1}\geq
 \frac{2m+2}{2m+1}c_{2m}.
\end{equation}
Combining \eqref{eq:q-odd-even}, the even coefficient estimate, and
\eqref{eq:c-odd-even} proves \eqref{eq:coefficient-target} for the odd
indices as well.  Since the $n=1$ coefficient inequality is strict,
\eqref{eq:D-over-L-zero} is strict for every $r>0$.
\end{proof}

\begin{lemma} \label{lem:log-interpolation}
If $1<\alpha\leq4/3$, then
\begin{equation}\label{eq:second-component-below}
 C_\alpha+K_\alpha^{(1)}\leq\frac32,
\end{equation}
with strict inequality when $1<\alpha<4/3$.
\end{lemma}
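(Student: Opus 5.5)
The plan is to interpolate in $\alpha$ between the endpoint $\alpha=1$, controlled by \cref{lem:alpha1-log-bound}, and the critical value $\alpha=4/3$, controlled by \cref{lem:coefficient-domination}, using Hölder's inequality (log-convexity) in the parameter $\alpha$. For $1<\alpha\leq4/3$ write $\alpha=(1-\theta)\cdot1+\theta\cdot\frac43$ with $\theta=3(\alpha-1)\in(0,1]$.

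\emph{Step 1: log-convexity of $P_\alpha$ in $\alpha$.} Since $(1-s^2)^{-\alpha}=\bigl((1-s^2)^{-1}\bigr)^{1-\theta}\bigl((1-s^2)^{-4/3}\bigr)^{\theta}$, Hölder's inequality with exponents $1/(1-\theta)$ and $1/\theta$ on $[0,t]$ gives
\[
 P_\alpha(t)\leq P_1(t)^{1-\theta}P_{4/3}(t)^{\theta}.
\]
Integrating and applying Hölder once more yields $C_\alpha\leq C_1^{1-\theta}C_{4/3}^{\theta}$. In the same way, split the weight as $(1-r^2)^\alpha=(1-r^2)^{1-\theta}(1-r^2)^{4\theta/3}$, and similarly split $t/(1-rt)^2$ and $1/L(r)$ trivially into powers $1-\theta$ and $\theta$. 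Then Hölder in $t$ gives, for each fixed $r$,
\[
 \frac{D_\alpha(r)}{L(r)}
 \leq\Bigl(\frac{D_1(r)}{L(r)}\Bigr)^{1-\theta}
 \Bigl(\frac{D_{4/3}(r)}{L(r)}\Bigr)^{\theta}.
\]
Taking suprema gives $K_\alpha^{(1)}\leq (K_1^{(1)})^{1-\theta}(K_{4/3}^{(1)})^{\theta}$, where $K_1^{(1)}=\sup_r D_1/L\leq\log2$ by \cref{lem:alpha1-log-bound}, and $K_{4/3}^{(1)}=D_{4/3}(0)$ by \cref{lem:coefficient-domination}.

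\emph{Step 2: combine the two terms.} By the discrete Hölder inequality $a^{1-\theta}b^\theta+c^{1-\theta}d^\theta\leq(a+c)^{1-\theta}(b+d)^\theta$,
\[
 C_\alpha+K_\alpha^{(1)}\leq(C_1+\log2)^{1-\theta}\bigl(C_{4/3}+D_{4/3}(0)\bigr)^{\theta}
 =(2\log2)^{1-\theta}\bigl(C_{4/3}+D_{4/3}(0)\bigr)^{\theta}.
\]

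\emph{Step 3: the endpoint identity.} It remains to check $C_{4/3}+D_{4/3}(0)=3/2$. By \eqref{eq:QdE} and \eqref{eq:d-gamma}, $C_{4/3}=\frac32+d_{4/3}$ with $d_{4/3}=\sqrt\pi\,\Gamma(-1/3)/(2\Gamma(1/6))$. Using $\Gamma(-1/3)=-3\Gamma(2/3)$ and $\Gamma(1/6)=6\Gamma(7/6)$, we get $d_{4/3}=-\sqrt\pi\,\Gamma(2/3)/(4\Gamma(7/6))=-D_{4/3}(0)$ by \eqref{eq:D0-formula}. Hence the bound in Step 2 is $(2\log2)^{1-\theta}(3/2)^\theta\leq 3/2$. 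Since $2\log2<3/2$, this is strict whenever $\theta<1$, that is, for $\alpha<4/3$.

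The only delicate point is Step 1: one must make sure that Hölder is applied to genuinely nonnegative integrands with the same weight factorization in both the $s$- and $t$-integrals. One must also check that the endpoint quantities $P_1$, $C_1$ and $D_1$ are the ones defined before \cref{lem:alpha1-log-bound}; they are, since $P_1=\operatorname{arctanh}$ is the $\alpha=1$ instance of \eqref{eq:P}. Everything else is already supplied by the two preceding lemmas.
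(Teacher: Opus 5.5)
Your proposal is correct and follows essentially the paper's route. The paper likewise interpolates $D_\alpha$ between $\alpha=1$ and $\alpha=4/3$ by Hölder, bounds the endpoints with \cref{lem:alpha1-log-bound} and \cref{lem:coefficient-domination}, and finishes with the tie $C_{4/3}+D_{4/3}(0)=3/2$ and $2\log2<3/2$. The only differences are minor: you obtain log-convexity of $C_\alpha$ from Hölder instead of convexity from $C_\alpha''>0$, and you merge the two geometric means by discrete Hölder into $(2\log2)^{1-\theta}(3/2)^\theta$, whereas the paper applies weighted AM--GM to each term and gets $(1-\theta)2\log2+\theta\cdot\tfrac32$.
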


\begin{proof}
Let
\[
 \theta=3(\alpha-1)\in(0,1],
 \qquad
 \alpha=(1-\theta)\cdot1+\theta\cdot\frac43.
\]
By Fubini's theorem,
\begin{equation}\label{eq:D-double}
 D_\alpha(r)=
 \int_{0<s<t<1}\frac{t}{(1-rt)^2}
 \left(\frac{1-r^2}{1-s^2}\right)^\alpha\,\dd s\,\dd t.
\end{equation}
H\"older's inequality applied to \eqref{eq:D-double} gives
\[
 D_\alpha(r)\leq D_1(r)^{1-\theta}D_{4/3}(r)^\theta.
\]
After division by $L(r)$ and taking suprema, \cref{lem:alpha1-log-bound}
and \cref{lem:coefficient-domination} yield
\begin{equation}\label{eq:K-interpolation}
 K_\alpha^{(1)}
 \leq(\log2)^{1-\theta}\bigl(D_{4/3}(0)\bigr)^\theta
 \leq(1-\theta)\log2+\theta D_{4/3}(0),
\end{equation}
where the last step is weighted AM--GM.

The map $\alpha\mapsto C_\alpha$ is strictly convex on $[1,4/3]$, because
\[
 C_\alpha''=
 \int_0^1(1-t)(1-t^2)^{-\alpha}
 \log^2(1-t^2)\,\dd t>0.
\]
Therefore
\begin{equation}\label{eq:C-convex}
 C_\alpha\leq(1-\theta)C_1+\theta C_{4/3}.
\end{equation}
Here $C_1=\log2$.  Moreover, using
$\Gamma(2/3)=-(1/3)\Gamma(-1/3)$ and
$\Gamma(7/6)=(1/6)\Gamma(1/6)$, we obtain
\begin{align*}
 D_{4/3}(0)
 &=\frac{\sqrt\pi\,\Gamma(2/3)}{4\Gamma(7/6)}
 =-\frac{\sqrt\pi\,\Gamma(-1/3)}{2\Gamma(1/6)}
 =-d_{4/3},
\end{align*}
so \eqref{eq:C-gamma} gives
\begin{equation}\label{eq:critical-log-tie}
 C_{4/3}+D_{4/3}(0)=\frac32.
\end{equation}
Combining \eqref{eq:K-interpolation}--\eqref{eq:critical-log-tie},
\[
 C_\alpha+K_\alpha^{(1)}
 \leq(1-\theta)2\log2+\theta\frac32\leq\frac32.
\]
Since $2\log2<3/2$, the last inequality is strict when $\theta<1$.
\end{proof}

\begin{proof}[Proof of Theorem~\ref{thm:log-main}]
By \eqref{eq:K0-half}, the first quantity on the right-hand side of
\eqref{eq:log-component-formula} is $3/2$.

If $1<\alpha\leq4/3$, then \cref{lem:log-interpolation} gives
\eqref{eq:second-component-below}, with strict inequality when
$1<\alpha<4/3$.  This proves the first case of \eqref{eq:log-exact}.

For $4/3\leq\alpha<2$, \cref{lem:coefficient-domination} gives
\[
 K_\alpha^{(1)}=D_\alpha(0),
\]
whose exact value is given in \eqref{eq:D0-formula}.
Since both $C_\alpha$ and $D_\alpha(0)$ are strictly increasing in
$\alpha$, \eqref{eq:critical-log-tie} implies the second case of
\eqref{eq:log-exact} for $4/3<\alpha<2$.  

It remains to determine the norm-attaining functions.  Suppose first that
$1<\alpha<4/3$ and that
$\norm{f}_{\mathcal B^\alpha}=1$.  Writing
$c=f(0)$ and $s=\norm{f}_{\alpha,*}$, we have $\abs c+s=1$ and
\[
 \norm{\mathcal Hf}_{\mathcal B^\alpha_{\log}}
 \leq
 \frac32\abs c
 +
 \bigl(C_\alpha+K_\alpha^{(1)}\bigr)s.
\]
Since the second coefficient is strictly smaller than $3/2$, equality
forces $s=0$.  Thus the norm-attaining functions of norm one are precisely the
unimodular constants.

If $4/3<\alpha<2$, then
\[
 C_\alpha+K_\alpha^{(1)}
 >
 1+K_\alpha^{(0)}.
\]
Hence equality in the norm estimate forces $c=0$ and $s=1$.  In this case
\[
 C_\alpha
 =
 \abs{\mathcal Hf(0)}
 \leq
 \int_0^1\abs{f(t)}\,\dd t
 \leq
 \int_0^1P_\alpha(t)\,\dd t
 =
 C_\alpha.
\]
Hence equality holds throughout.  Equality in
\eqref{eq:primitive-bound} and in the integral triangle inequality
implies that
\[
 f(t)=\lambda P_\alpha(t),
 \qquad 0<t<1,
\]
for some $\lambda\in\T$.  Since $P_\alpha(t)=F_\alpha(t)$ on $(0,1)$,
the identity theorem yields
\[
 f=\lambda F_\alpha.
\]

Finally, let $\alpha=4/3$.  By \eqref{eq:K0-half} and
\eqref{eq:critical-log-tie}, the two quantities in
\eqref{eq:log-component-formula} are equal to $3/2$.
And the suprema defining
$K_{4/3}^{(0)}$ and $K_{4/3}^{(1)}$ are attained at $r=0$.  Suppose $f$ has norm one and attains the operator norm. Since
\[
 \abs{\mathcal Hf(0)}
 \leq
 \int_0^1\abs{f(t)}\,\dd t
 \leq
 \int_0^1\bigl(\abs c+sP_{4/3}(t)\bigr)\,\dd t
 =
 \abs c+sC_{4/3},
\]
equality holds throughout.  Hence there exists $\lambda\in\T$ such that
\[
 f(t)
 =
 \lambda\bigl(\abs c+sP_{4/3}(t)\bigr),
 \qquad 0<t<1.
\]
Since $P_{4/3}(t)=F_{4/3}(t)$ on $(0,1)$, the identity theorem gives
\[
 f(z)
 =
 \lambda\bigl(\abs c+sF_{4/3}(z)\bigr),
 \qquad z\in\D.
\]
Since $\abs c+s=1$, writing $a=\abs c$ gives
\[
 f(z)
 =
 \lambda\bigl(a+(1-a)F_{4/3}(z)\bigr),
 \qquad 0\leq a\leq1.
\]
Conversely, a direct substitution shows that every function listed in
\eqref{eq:log-extremals} has norm one and attains the operator norm.
This proves \eqref{eq:log-extremals}.
\end{proof}

\subsection*{Author contributions}
Puyu Cui, Zhaopeng Lin, and Yufeng Lu contributed to the conception,
development, and writing of the manuscript. All authors read and approved
the final manuscript.

\subsection*{Funding}
Y. Lu was supported by the National Natural Science Foundation of China
(Grant No. 12031002).

\subsection*{Conflict of interest}
The authors have no conflict of interest to declare that are relevant to the content of this article.

\subsection*{Data availability statement}
No data, models, or code were generated or used for the research described in the article.

\bibliographystyle{amsplain}
\bibliography{references}

\medskip

\noindent
School of Mathematics, Liaoning Normal University,
Dalian, Liaoning 116029, P. R. China

\noindent
Email address: \texttt{cuipuyu1234@163.com} (Puyu Cui)

\medskip

\noindent
School of Mathematical Sciences, Dalian University of Technology,
Dalian, Liaoning 116024, P. R. China

\noindent
Email address: \texttt{linzhaopeng2606@163.com} (Zhaopeng Lin)

\medskip

\noindent
School of Mathematical Sciences, Dalian University of Technology,
Dalian, Liaoning 116024, P. R. China

\noindent
Email address: \texttt{lyfdlut@dlut.edu.cn} (Yufeng Lu)

\end{document}